\newtheorem{theorem}{Theorem}
\newtheorem{lemma}{Lemma}
\newtheorem{corollary}{Corollary}
\newtheorem*{philosinequality}{Philos' Inequality}
\newtheorem*{dphilosinequality}{Discrete Philos' Inequality}
\newtheorem{property}{Property}
\theoremstyle{definition}
\newtheorem{definition}{Definition}
\newtheorem{example}{Example}
\newtheorem{remark}{Remark}
\newcommand{\cnt}[1]{\mathrm{C}^{#1}}
\newcommand{\crd}[1]{\mathrm{C}_{\mathrm{rd}}^{#1}}
\newcommand{\reg}[1]{\mathcal{R}^{#1}}
\newcommand{\gf}[1]{\mathrm{g}_{#1}}
\newcommand{\hf}[1]{\mathrm{h}_{#1}}
\newcommand{\ef}[1]{\mathrm{e}_{#1}}
\newcommand{\N}{\mathbb{N}}
\newcommand{\R}{\mathbb{R}}
\newcommand{\T}{\mathbb{T}}
\newcommand{\Z}{\mathbb{Z}}
\title{Philos' inequality on time scales and its application\\ in the oscillation theory}
\author{Ba\c{s}ak Karpuz}
\begin{document}

\date{\footnotesize Department of Mathematics, Faculty of Science, Dokuz Eylul University, Izmir 35160, Turkey}
\maketitle

\begin{abstract}
In [Bull.\ Acad.\ Polon.\ Sci.\ S\'{e}r.\ Sci.\ Math.\ 29 (1981), no.~7-8, 367{\textendash}370], Philos proved the following result:
Let $f:[t_{0},\infty)_{\R}\to\R$ be an $n$-times differentiable function such that $f^{(n)}(t)\leq0$ ($\not\equiv0$) and $f(t)>0$ for all $t\geq{}t_{0}$.
If $f$ is unbounded, then $f(t)\geq\frac{\lambda{}t^{n-1}}{(n-1)!}f^{(n-1)}(t)$ for all sufficiently large $t$,
where $\lambda\in(0,1)_{\R}$.
In this work, we first present time scales unification of this result.
Then, by using it, we provide sufficient conditions for oscillation and asymptotic behaviour of solutions to higher-order neutral dynamic equations.\smallskip\\
\noindent\textbf{Keywords}: Asymptotic behaviour, dynamic equations, higher-order, oscillation, time scales.\\
\noindent\textbf{Mathematics Subject Classification 2010}: 34K11 (Primary), 34K40 (Secondary).
\end{abstract}

\section{Introduction}\label{intro}

%%%

In this paper, we will study oscillation of solutions to the higher-order delay dynamic equations of the form
\begin{equation}
\bigl[x(t)+A(t)x\bigl(\alpha(t)\bigr)\bigr]^{\Delta^{n}}+B(t)x\bigl(\beta(t)\bigr)=0
\quad\text{for}\ t\in[t_{0},\infty)_{\T},\label{introeq1}
\end{equation}
where $n\in\N$, $\T$ is a time scale unbounded above, $t_{0}\in\T$, $A\in\crd{}([t_{0},\infty)_{\T},\R)$ and $B\in\crd{}([t_{0},\infty)_{\T},\R_{0}^{+})$, and $\alpha,\beta\in\crd{}([t_{0},\infty)_{\T},\T)$ are unbounded nondecreasing functions such that $\alpha(t),\beta(t)\leq{}t$ for all $t\in[t_{0},\infty)_{\T}$.
We will confine our attention to the following ranges of the coefficient $A$.
\begin{enumerate}[label={(R\arabic*)},leftmargin={*},align={left},ref={(R\arabic*)}]
\item\label{r1} $A\in\crd{}([t_{0},\infty)_{\T},[0,1]_{\R})$ with $\limsup_{t\to\infty}A(t)<1$.
\item\label{r2} $A\in\crd{}([t_{0},\infty)_{\T},[-1,0]_{\R})$ with $\liminf_{t\to\infty}A(t)>-1$.
\end{enumerate}
The qualitative theory of dynamic equations has been developing faster for second-order and first-order equations
when compared to higher-order equations.
Although the theory of dynamic equations unifies the theories of differential and of difference equations,
one can see that there is not much accomplished for higher-order dynamic equations.
This is caused by the technical obstacles in the computations in the proofs and the absence
of the dynamic generalizations of the basic inequalities one of which is the so-called Philos' inequality
which we will prove its time scales generalization here.

Philos' inequality reads as follows.

\begin{philosinequality}[{\protect\cite[Lemma~2]{MR0640329}}]\label{pithmc}
Assume that $n\in\N$ and $f\in\cnt{n}([t_{0},\infty),\R^{+})$ with $f^{\Delta^{n}}\leq0$ ($\not\equiv0$) on $[t_{0},\infty)$.
If $f$ is unbounded, then we have
\begin{equation}
f(t)\geq\frac{(t-s)^{n-1}}{(n-1)!}f^{(n-1)}(t)
\quad\text{for all}\ t\geq{}s,\notag
\end{equation}
where $s\geq{}t_{0}$ is sufficiently large.
\end{philosinequality}

A discrete counterpart of Philos' inequality is given in \cite{MR1155840},
which reads as follows.

\begin{dphilosinequality}[{\protect\cite[Corollary~1.8.12]{MR1155840}}]\label{pithmd}
Let $\{f(t)\}$ be a sequence defined for $t=t_{0},t_{0}+1,\cdots$, and $f(t)>0$ and $\Delta^{n}f(t)\leq0$ ($\not\equiv0$) for $t=t_{0},t_{0}+1,\cdots$.
Then, there exists a large integer $s\geq{}t_{0}$ such that
\begin{equation}
f(t)\geq\frac{(t-s)^{(n-1)}}{(n-1)!}\Delta^{n-1}f(2^{n-m-1}t)
\quad\text{for all}\ t=s,s+1,\cdots,\notag
\end{equation}
where $^{(\cdot)}$ denotes the falling factorial function
and $m$ is the key number in discrete Kiguradze's lemma (\cite[Theorem~1.8.11]{MR1155840}).
\end{dphilosinequality}

Philos' inequality and its consequences,
which have been reference for a large number of papers,
can be regarded as one of the corner stones in the oscillation theory of higher-order delay differential equations.
A result similar to this is attended to be proved in \cite[Lemma~5]{MR3029346},
however there are some inconsistencies in its proof.
We will state and prove the dynamic generalization of Philos' inequality,
which covers the one for continuous case and improves the one for discrete case.
After proving the dynamic generalization of Philos' inequality,
we will provide easily verifiable and efficient comparison tests
for the oscillation and asymptotic behaviour of solutions to
higher-order dynamic equations depending on the order
and the two ranges of the neutral coefficient given above.

Some results for the asymptotic behaviour of solutions of higher-order dynamic equations
can also be found in \cite{MR1678096,MR2596170,MR2900835,MR3019096,MR3065777,MR2557101,MR2506155,MR3091942,MR3393378,bk17,MR2831532,MR3029346}.
As we will be making comparison with first-order dynamic equations, we find useful to redirect the readers to the papers \cite{MR2536378,MR2397023,MR2145447,MR2475963,MR2683912,MR3066722,MR3518254,MR2367669},
where they can find the most important oscillation tests for first-order dynamic equations.

To give an exact definition of a solution for the delay dynamic equation \eqref{introeq1},
we need to define $t_{-1}:=\min\{\alpha(t_{0}),\beta(t_{0})\}$.

%%%

\begin{definition}[Solution]
A function $x:[t_{-1},\infty)_{\T}\to\R$,
which is rd-continuous on $[t_{-1},t_{0}]_{\T}$ and $x+A\cdot{}x\circ\alpha$ is $n$ times $\Delta$-differentiable on $[t_{0},\infty)$,
is called a solution of \eqref{introeq1} provided that it satisfies the functional delay equation \eqref{introeq1} identically on $[t_{0},\infty)$.
\end{definition}

%%%

It can be shown as in \cite{MR2793808} that \eqref{introeq1} admits a unique solution, which exists on the entire interval $[t_{-1},\infty)_{\T}$,
when an rd-continuous initial function $\varphi:[t_{-1},t_{0}]_{\T}\to\R$ is prescribed.
More precisely, we mean in the equation that $x(t)=\varphi(t)$ for $t\in[t_{-1},t_{0}]_{\T}$.

%%%

\begin{definition}[Oscillation]
A solution $x$ of \eqref{introeq1} is called nonoscillatory if there exists $s\in[t_{0},\infty)_{\T}$
such that $x$ is either positive or negative on $[s,\infty)$.
Otherwise, the solution is said to oscillate (or is called oscillatory).
\end{definition}

%%%

The outline of the paper is organized as follows.
\S~\ref{tl} contains some fundamental results on qualitative properties of functions on time scales,
and we prove Philos' inequality in its subsection \S~\ref{pi}.
In the subsection \S~\ref{rr}, we quote some recent results on the oscillation/nonoscillation of dynamic equations,
which will be required in the sequel.
\S~\ref{mr} consists of two subsections.
In the first subsection \S~\ref{nne}, we give some comparison theorems on the qualitative behaviour of higher-order
delay dynamic equations without a neutral term,
and in the second subsection \S~\ref{ne}, we extend these results
to higher-order delay dynamic equations with a neutral term.
In the appendix section \S~\ref{app}, we present a brief introduction to the time scales calculus
and supply some important results concerning the properties of the polynomials on time scales.

%%%

\section{Technical lemmas}\label{tl}

%%%

In this section, we will form the basic facilities for the proof of our main result.

%%%

\begin{lemma}[Kiguradze's lemma \protect{\cite[Theorem~5]{MR1678096}}]\label{tllm1}
Assume that $\sup\T=\infty$, $n\in\N$ and $f\in\crd{n}([t_{0},\infty)_{\T},\R_{0}^{+})$.
Suppose that either $f^{\Delta^{n}}\geq0$ ($\not\equiv0$) or $f^{\Delta^{n}}\leq0$ ($\not\equiv0$) on $[t_{0},\infty)_{\T}$.
Then, there exist $s\in[t_{0},\infty)_{\T}$ and $m\in[0,n)_{\Z}$ such that $(-1)^{n-m}f^{\Delta^{n}}(t)\geq0$ for all $t\in[s,\infty)_{\T}$.
Moreover, the following assertions hold.
\begin{enumerate}[label={(\roman*)},leftmargin={*},align={left},ref={(\roman*)}] %,itemindent={*},labelindent={*}
\setcounter{enumi}{0}
\item\label{tllm1it1} $f^{\Delta^{k}}(t)>0$ holds for all $t\in[s,\infty)_{\T}$ and all $k\in[0,m)_{\Z}$.
\item\label{tllm1it2} $(-1)^{m+k}f^{\Delta^{k}}(t)>0$ holds for all $t\in[s,\infty)_{\T}$ and all $k\in[m,n)_{\Z}$.
\end{enumerate}
\end{lemma}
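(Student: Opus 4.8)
My plan is to reproduce, in the time scales setting, the two‑step scheme behind the classical Kiguradze lemma: first show that every lower‑order $\Delta$‑derivative of $f$ is eventually monotone with a single strict sign, and then read off the index $m$ and the alternating sign pattern from a short list of integral estimates. For Step~1 I would argue by downward induction on $k$, from $k=n-1$ down to $k=0$, that there is $s_k\in[t_0,\infty)_\T$ such that $f^{\Delta^k}$ is monotone on $[s_k,\infty)_\T$ and keeps one strict sign there. For $k=n-1$, the time scales fundamental theorem of calculus gives $f^{\Delta^{n-1}}(t_2)-f^{\Delta^{n-1}}(t_1)=\int_{t_1}^{t_2}f^{\Delta^n}(\tau)\,\Delta\tau$, whose sign is fixed for all sufficiently large $t_1<t_2$ since $f^{\Delta^n}$ is eventually of one sign; hence $f^{\Delta^{n-1}}$ is eventually monotone, therefore converges in $[-\infty,+\infty]$ and is eventually of one sign, and that sign is strict because $f^{\Delta^{n-1}}\equiv0$ on a whole tail would force $f^{\Delta^n}\equiv0$ on a tail, against ``$\not\equiv0$''. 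The induction step is verbatim the same, with the already controlled $f^{\Delta^{k+1}}$ playing the role of the derivative of $f^{\Delta^k}$ (here one uses that the $\Delta$‑integral of an rd‑continuous function of one strict sign over a nonempty interval has that same strict sign, which is where the right‑dense/right‑scattered case analysis enters). At $k=0$ the hypothesis $f\in\crd{n}([t_0,\infty)_\T,\R_0^+)$ forces the sign to be $+$, i.e.\ $f>0$ eventually. Take $s$ past all the $s_k$, write $\epsilon_k\in\{-1,+1\}$ for the eventual sign of $f^{\Delta^k}$ on $[s,\infty)_\T$ (so $\epsilon_0=+1$), and let $\epsilon_n\in\{-1,+1\}$ be chosen with $\epsilon_nf^{\Delta^n}\ge0$ near $\infty$.

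For Step~2 I would use repeatedly the two elementary estimates: if an rd‑continuous $h$ satisfies $h\ge c>0$ near $\infty$ then every $\Delta$‑antiderivative of $h$ tends to $+\infty$, and symmetrically for $h\le c<0$. From these: (a) if $\epsilon_k=-1$ for some $k\in\{1,\dots,n-1\}$ then $\epsilon_{k+1}=+1$, for otherwise $f^{\Delta^k}$ is eventually decreasing and negative, hence bounded above by a negative constant, which drives $f^{\Delta^{k-1}},\dots,f^{\Delta^0}=f$ to $-\infty$ and contradicts $f>0$; and (b) if $\epsilon_k=-1$ (so $\epsilon_{k+1}=+1$ by (a)) then $\epsilon_{k+2}=-1$, for otherwise $f^{\Delta^{k+1}}$ is eventually increasing and positive, hence bounded below by a positive constant, which drives $f^{\Delta^k}$ to $+\infty$, contradicting $\epsilon_k=-1$; the same estimates apply when the relevant index equals $n$, reading $\epsilon_n$ as above. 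Iterating (a) and (b) shows that once the sequence $\epsilon_0,\epsilon_1,\dots$ leaves $+1$ it strictly alternates. I would then take $m$ to be the largest index with $\epsilon_0=\cdots=\epsilon_m=+1$: Step~1 gives $f^{\Delta^k}>0$ for $0\le k\le m$; (a)--(b) give $\epsilon_k=(-1)^{k-m}$ for $m<k\le n-1$, i.e.\ $(-1)^{m+k}f^{\Delta^k}>0$ on that range; and a final application of (a)--(b) at the top index yields $\epsilon_n=(-1)^{n-m}$, that is $(-1)^{n-m}f^{\Delta^n}\ge0$ on $[s,\infty)_\T$, as required.

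The main obstacle is not the bookkeeping of Step~2 (which is the combinatorial core, but purely a sign argument) — it is the very first reduction. On $\R$ it is immediate from the mean value theorem that a differentiable function with an eventually‑signed derivative is eventually monotone; over a general time scale one must instead go through the $\Delta$‑integral and distinguish right‑dense from right‑scattered points to see that a one‑signed rd‑continuous integrand over a genuinely nonempty interval yields an integral of that sign, and one must fix the reading of ``$\not\equiv0$'' as ``not identically zero on any neighbourhood of $\infty$'' in order to exclude the degenerate possibility that some $f^{\Delta^k}$ vanishes on a whole tail. Once Step~1 is secured on this footing, the parity relation between $m$ and $n$ is exactly what the alternation in (b) encodes, and the proof closes.
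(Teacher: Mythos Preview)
The paper does not prove this lemma; it is quoted from \cite[Theorem~5]{MR1678096} without argument, so there is no in-paper proof to compare against. Your two-step scheme --- downward induction on $k$ to obtain eventual strict signs $\epsilon_k$ for each $f^{\Delta^{k}}$, followed by the sign-propagation rules (a) and (b) to read off the index $m$ and the alternating pattern --- is precisely the classical proof of Kiguradze's lemma transported to time scales, and matches the argument in the cited source.

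One boundary case is not covered. If $\epsilon_0=\cdots=\epsilon_{n-1}=+1$ and also $\epsilon_n=+1$ (for instance $f(t)=\mathrm{e}^{t}$ on $\T=\R$, any $n$), then neither (a) nor (b) fires, and your line ``a final application of (a)--(b) at the top index yields $\epsilon_n=(-1)^{n-m}$'' has no force: with $m=n-1$ you would need $\epsilon_n=-1$, which is false here. This is not a flaw in your method but in the range printed for $m$: the standard statement permits $m=n$ (item~(ii) then vacuous and $(-1)^{n-n}=1$). In the only regime the paper actually uses, namely $f^{\Delta^{n}}\leq0$, the contrapositive of your rule~(a) at $k=n-1$ already gives $\epsilon_{n-1}=+1$, so $m\le n-1$ is automatic and your argument closes without this obstruction. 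Your remark that ``$\not\equiv0$'' must be read as ``not identically zero on any tail'' is likewise well taken and is the standard convention in this literature.
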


%%%

\begin{lemma}[{\protect\cite[Lemma~7]{MR1678096}}]\label{tllm2}
If $\sup\T=\infty$, $n\in\N$ and $f\in\crd{n}([t_{0},\infty)_{\T},\R)$, then the following conditions are true.
\begin{enumerate}[label={(\roman*)},leftmargin={*},align={left},ref={(\roman*)}]
\setcounter{enumi}{0}
\item\label{tllm2it1} $\liminf_{t\to\infty}f^{\Delta^{n}}(t)>0$ implies $\lim_{t\to\infty}f^{\Delta^{k}}(t)=\infty$ for all $k\in[0,n)_{\Z}$.
\item\label{tllm2it2} $\limsup_{t\to\infty}f^{\Delta^{n}}(t)<0$ implies $\lim_{t\to\infty}f^{\Delta^{k}}(t)=-\infty$ for all $k\in[0,n)_{\Z}$.
\end{enumerate}
\end{lemma}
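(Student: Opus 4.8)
The plan is to prove assertion \ref{tllm2it1} by a descending induction on the order $k$, running from $k=n-1$ down to $k=0$, and then to obtain assertion \ref{tllm2it2} for free by applying \ref{tllm2it1} to $-f$. The only tool required is the fundamental theorem of calculus for the $\Delta$-integral, which is legitimate here because $f\in\crd{n}([t_{0},\infty)_{\T},\R)$ guarantees that each $f^{\Delta^{k}}$ with $k\in[0,n]_{\Z}$ is rd-continuous and hence $\Delta$-integrable. The governing idea is that a constant positive lower bound on a derivative integrates to linear growth, which diverges precisely because $\sup\T=\infty$.

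For the base case $k=n-1$, I would set $L:=\liminf_{t\to\infty}f^{\Delta^{n}}(t)>0$ and convert the inferior-limit hypothesis into an honest eventual bound: there is $T\in[t_{0},\infty)_{\T}$ with $f^{\Delta^{n}}(t)\geq L/2$ for all $t\in[T,\infty)_{\T}$. Integrating from $T$ to $t$ then gives
\begin{equation}
f^{\Delta^{n-1}}(t)=f^{\Delta^{n-1}}(T)+\int_{T}^{t}f^{\Delta^{n}}(\tau)\Delta\tau\geq f^{\Delta^{n-1}}(T)+\frac{L}{2}(t-T),\notag
\end{equation}
whose right-hand side tends to $\infty$ as $t\to\infty$ since $\sup\T=\infty$; hence $\lim_{t\to\infty}f^{\Delta^{n-1}}(t)=\infty$. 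For the inductive step I would assume $\lim_{t\to\infty}f^{\Delta^{k+1}}(t)=\infty$ for some $k\in[0,n-1)_{\Z}$, fix an arbitrary $M>0$, choose $T_{M}$ with $f^{\Delta^{k+1}}(t)\geq M$ on $[T_{M},\infty)_{\T}$, and integrate once more to obtain
\begin{equation}
f^{\Delta^{k}}(t)=f^{\Delta^{k}}(T_{M})+\int_{T_{M}}^{t}f^{\Delta^{k+1}}(\tau)\Delta\tau\geq f^{\Delta^{k}}(T_{M})+M(t-T_{M}).\notag
\end{equation}
Letting $t\to\infty$ yields $\liminf_{t\to\infty}f^{\Delta^{k}}(t)\geq M$, and since $M>0$ is arbitrary this forces $\lim_{t\to\infty}f^{\Delta^{k}}(t)=\infty$, closing the induction and proving \ref{tllm2it1}.

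Finally, assertion \ref{tllm2it2} follows by the substitution $f\mapsto-f$: under its hypothesis one has $\liminf_{t\to\infty}(-f)^{\Delta^{n}}(t)=-\limsup_{t\to\infty}f^{\Delta^{n}}(t)>0$, so \ref{tllm2it1} gives $\lim_{t\to\infty}(-f)^{\Delta^{k}}(t)=\infty$, that is $\lim_{t\to\infty}f^{\Delta^{k}}(t)=-\infty$, for every $k\in[0,n)_{\Z}$. I expect no genuine obstacle in this argument; it is a short two-line integration at each stage. The only points demanding care are the faithful translation of the inferior/superior-limit hypotheses into eventual pointwise bounds before integrating, and the observation that the divergence of $\int_{T}^{t}c\,\Delta\tau=c(t-T)$ rests entirely on the unboundedness $\sup\T=\infty$.
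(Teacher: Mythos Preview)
Your argument is correct. The descending induction via the fundamental theorem of $\Delta$-calculus, together with the observation that $\int_{T}^{t}c\,\Delta\tau=c(t-T)\to\infty$ under $\sup\T=\infty$, is exactly the right mechanism, and deducing \ref{tllm2it2} from \ref{tllm2it1} by passing to $-f$ is clean. One cosmetic remark: in the inductive step you do not actually need $M$ to be arbitrary, since for any fixed $M>0$ the bound $f^{\Delta^{k}}(t)\geq f^{\Delta^{k}}(T_{M})+M(t-T_{M})$ already diverges; the argument works either way.

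As for comparison with the paper: the paper does not supply its own proof of this lemma. It is quoted verbatim as \cite[Lemma~7]{MR1678096} and used as a black box, so there is no in-paper argument to compare against. Your write-up is a standard and complete justification of the cited result.
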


%%%

\begin{corollary}[{\protect\cite[Corollary~2.10]{MR2900835}}]\label{tlcrl1}
If $\sup\T=\infty$, $n\in\N$ and $f\in\crd{n}([t_{0},\infty)_{\T},\R_{0}^{+})$, then
\begin{equation}
\lim_{t\to\infty}f^{\Delta^{k}}(t)=0
\quad\text{for all}\ k\in(m,n)_{\Z},\notag
\end{equation}
where $m\in[0,n)_{\Z}$ is the key number in \hyperref[tllm1]{Kiguradze's lemma}.
\end{corollary}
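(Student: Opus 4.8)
The plan is to derive the claim directly from \hyperref[tllm1]{Kiguradze's lemma} together with an elementary monotonicity-and-integration argument. Since the statement invokes ``the key number $m$ in Kiguradze's lemma'', I read it under the standing hypothesis that $f^{\Delta^{n}}$ is eventually of one sign and not identically zero, so that \hyperref[tllm1]{Kiguradze's lemma} applies and provides $s\in[t_{0},\infty)_{\T}$ and $m\in[0,n)_{\Z}$ with $(-1)^{n-m}f^{\Delta^{n}}(t)\geq0$ on $[s,\infty)_{\T}$, with $f^{\Delta^{k}}(t)>0$ on $[s,\infty)_{\T}$ for $k\in[0,m)_{\Z}$, and with $(-1)^{m+k}f^{\Delta^{k}}(t)>0$ on $[s,\infty)_{\T}$ for $k\in[m,n)_{\Z}$. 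If $(m,n)_{\Z}$ is empty there is nothing to prove, so I fix $k\in(m,n)_{\Z}$ and set $g:=(-1)^{m+k}f^{\Delta^{k}}$ and $h:=(-1)^{m+k-1}f^{\Delta^{k-1}}$. Because $m\leq k-1<k\leq n-1$, the indices $k-1$ and $k$ both lie in $[m,n)_{\Z}$, so $g>0$ and $h>0$ on $[s,\infty)_{\T}$.

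The next step is to check that $g$ is nonincreasing on $[s,\infty)_{\T}$. Since $g^{\Delta}=(-1)^{m+k}f^{\Delta^{k+1}}$, I argue by cases: if $k+1\leq n-1$, then $k+1\in[m,n)_{\Z}$ and \hyperref[tllm1]{Kiguradze's lemma} gives $(-1)^{m+k+1}f^{\Delta^{k+1}}(t)>0$, hence $g^{\Delta}(t)<0$; if $k+1=n$, then $(-1)^{m+k}=(-1)^{m+n-1}=-(-1)^{n-m}$, so $g^{\Delta}(t)=-(-1)^{n-m}f^{\Delta^{n}}(t)\leq0$. In either case $g$ is nonincreasing and, being positive, bounded below, so $c:=\lim_{t\to\infty}g(t)$ exists, $c\geq0$, and $g(t)\geq c$ for all $t\in[s,\infty)_{\T}$.

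The last step is to use $h$ as an antiderivative of $-g$. From $h^{\Delta}=(-1)^{m+k-1}f^{\Delta^{k}}=-g$ and the fundamental theorem of calculus on time scales, for every $t\in[s,\infty)_{\T}$
\begin{equation}
c\,(t-s)\leq\int_{s}^{t}g(\tau)\Delta\tau=h(s)-h(t)\leq h(s),\notag
\end{equation}
the first inequality using $g\geq c$ and the last using $h(t)>0$. Since $\sup\T=\infty$, the bound $c\,(t-s)\leq h(s)$ for all $t$ forces $c=0$, and because $|f^{\Delta^{k}}(t)|=g(t)$ we conclude $\lim_{t\to\infty}f^{\Delta^{k}}(t)=0$, as required. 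The only points needing mild care are the parity bookkeeping in the exponents of $-1$ and the boundary case $k+1=n$, where one has only the weak inequality $(-1)^{n-m}f^{\Delta^{n}}\geq0$ instead of a strict one; the convergence of the bounded monotone function $g$ and the elementary properties of the $\Delta$-integral are standard.
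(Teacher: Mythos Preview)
Your proof is correct. The paper does not supply its own proof of this corollary; it simply quotes the result from \cite[Corollary~2.10]{MR2900835}, so there is no in-paper argument to compare against. Your reading of the implicit hypothesis (that $f^{\Delta^{n}}$ is eventually of one sign and $\not\equiv0$, so that the key number $m$ from \hyperref[tllm1]{Kiguradze's lemma} is well defined) is the right one, and the monotonicity-plus-integration argument you give is the standard way to establish such a statement: for each $k\in(m,n)_{\Z}$ the function $g=(-1)^{m+k}f^{\Delta^{k}}$ is positive and nonincreasing, hence convergent to some $c\geq0$, and integrating $g$ against the positive antiderivative $h=(-1)^{m+k-1}f^{\Delta^{k-1}}$ forces $c=0$. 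The parity bookkeeping and the boundary case $k=n-1$ are handled cleanly.
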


%%%

\subsection{Philos' inequality}\label{pi}

%%%

In this section, we present and prove the dynamic generalization of the well-known inequality \cite[Lemma~2]{MR0640329}.

%%%

\begin{theorem}[Dynamic Philos' inequality]\label{pithm1}
Assume that $\sup\T=\infty$, $n\in[2,\infty)_{\Z}$ and $f\in\crd{n}([t_{0},\infty)_{\T},\R_{0}^{+})$ with $f^{\Delta^{n}}\leq0$ ($\not\equiv0$) on $[t_{0},\infty)_{\T}$.
Then, we have
\begin{equation}
f(t)\geq\hf{n-1}(t,s)f^{\Delta^{n-1}}(t)
\quad\text{for all}\ t\in[s,\infty)_{\T},\label{pithm1eq1}
\end{equation}
where $s\in[t_{0},\infty)_{\T}$ is defined as in \hyperref[tllm1]{Kiguradze's lemma}.
\end{theorem}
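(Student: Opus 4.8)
The plan is to reduce the inequality $f(t)\geq\hf{n-1}(t,s)f^{\Delta^{n-1}}(t)$ to an iterated application of the fundamental theorem of calculus together with the sign information supplied by \hyperref[tllm1]{Kiguradze's lemma}. Fix the $s\in[t_{0},\infty)_{\T}$ and the key integer $m\in[0,n)_{\Z}$ from that lemma, so that on $[s,\infty)_{\T}$ we have $f^{\Delta^{k}}>0$ for $k\in[0,m)_{\Z}$ and $(-1)^{m+k}f^{\Delta^{k}}>0$ for $k\in[m,n)_{\Z}$; in particular $f^{\Delta^{n-1}}$ has constant sign $(-1)^{n-1-m}$ and $f^{\Delta^{n}}\leq0$, so $f^{\Delta^{n-1}}$ is nonincreasing. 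If $f^{\Delta^{n-1}}(t)\leq 0$ on $[s,\infty)_{\T}$ there is nothing to prove since the left side is nonnegative and $\hf{n-1}(t,s)\geq0$ for $t\geq s$; hence I may assume $f^{\Delta^{n-1}}(t)>0$ there, which forces $m=n-1$ and therefore $f^{\Delta^{k}}(t)>0$ for every $k\in[0,n-1]_{\Z}$ on $[s,\infty)_{\T}$.

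First I would record the monotonicity estimate coming from $f^{\Delta^{n}}\leq 0$: for $r\geq s$,
\begin{equation}
f^{\Delta^{n-1}}(r)=f^{\Delta^{n-1}}(t)-\int_{r}^{t}f^{\Delta^{n}}(\tau)\,\Delta\tau\geq f^{\Delta^{n-1}}(t)\quad\text{for all}\ r\in[s,t]_{\T}.\notag
\end{equation}
Then I would integrate this $n-1$ times from $s$ up to $t$, using at each stage the positivity $f^{\Delta^{k}}(s)>0$ established above to discard the boundary term, and using the defining property $\hf{k}^{\Delta}(\cdot,s)=\hf{k-1}(\cdot,s)$ and $\hf{0}\equiv1$ of the time scale monomials recalled in the appendix. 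Concretely, $f^{\Delta^{n-2}}(t)\geq f^{\Delta^{n-2}}(t)-f^{\Delta^{n-2}}(s)=\int_{s}^{t}f^{\Delta^{n-1}}(\tau)\,\Delta\tau\geq\int_{s}^{t}\hf{0}(\tau,s)\Delta\tau\, f^{\Delta^{n-1}}(t)=\hf{1}(t,s)f^{\Delta^{n-1}}(t)$, and iterating this pattern $k=n-2,n-3,\dots,0$ yields $f^{\Delta^{k}}(t)\geq\hf{n-1-k}(t,s)f^{\Delta^{n-1}}(t)$, whose $k=0$ instance is exactly \eqref{pithm1eq1}.

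The one genuinely nontrivial point — and the place I expect to spend the most care — is the inductive integration step: on a time scale one cannot simply "move $f^{\Delta^{n-1}}(t)$ outside the integral," so I must justify $\int_{s}^{t}\hf{j}(\tau,s)f^{\Delta^{j+1}}(\tau)\,\Delta\tau\geq\hf{j+1}(t,s)f^{\Delta^{n-1}}(t)$ by combining the pointwise bound $f^{\Delta^{j+1}}(\tau)\geq f^{\Delta^{n-1}}(t)$ for $\tau\leq t$ (which needs $f^{\Delta^{j+1}}$ nonincreasing, i.e. $f^{\Delta^{j+2}}\leq0$; this holds for all relevant $j$ because every derivative of order $\geq m=n-1$ up to $n$ is sign-controlled, but for the intermediate orders $j+1<n-1$ I instead use monotonicity of $f^{\Delta^{n-1}}$ directly inside a single repeated integration rather than bounding each intermediate derivative) together with nonnegativity of $\hf{j}(\tau,s)$ for $\tau\in[s,t]_{\T}$ and the monomial recursion $\int_{s}^{t}\hf{j}(\tau,s)\,\Delta\tau=\hf{j+1}(t,s)$. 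A cleaner bookkeeping that sidesteps the intermediate-derivative sign issue is to write, after the first integration, $f(t)\geq\int_{s}^{t}\int_{s}^{\tau_{1}}\cdots\int_{s}^{\tau_{n-2}}f^{\Delta^{n-1}}(\tau_{n-1})\,\Delta\tau_{n-1}\cdots\Delta\tau_{1}$ by dropping all boundary terms (each $f^{\Delta^{k}}(s)>0$), and then apply the single bound $f^{\Delta^{n-1}}(\tau_{n-1})\geq f^{\Delta^{n-1}}(t)$ for $\tau_{n-1}\leq t$ before collapsing the $(n-1)$-fold iterated integral of the constant back to $\hf{n-1}(t,s)$ via the monomial identities; I would choose this formulation in the final write-up.
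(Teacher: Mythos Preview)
Your reduction to the case $f^{\Delta^{n-1}}>0$ is fine, but the conclusion you draw from it is not: $f^{\Delta^{n-1}}>0$ does \emph{not} force $m=n-1$. Under the hypotheses, \hyperref[tllm1]{Kiguradze's lemma} together with $f^{\Delta^{n}}\leq0$ ($\not\equiv0$) forces $n-m$ to be odd, hence $m+(n-1)$ is even, and then $(-1)^{m+(n-1)}f^{\Delta^{n-1}}>0$ already gives $f^{\Delta^{n-1}}>0$ for \emph{every} admissible $m$. So your ``trivial case'' never occurs, and you must handle all $m\in[0,n)_{\Z}$ with $n-m$ odd, not only $m=n-1$.

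This matters because your iterated-integration argument needs $f^{\Delta^{k}}(s)\geq0$ for each $k\in[0,n-2]_{\Z}$ in order to discard the boundary terms, and that fails whenever $m\leq n-2$: for $k=n-2\geq m$ one has $(-1)^{m+(n-2)}f^{\Delta^{n-2}}>0$ with $m+(n-2)$ odd, i.e.\ $f^{\Delta^{n-2}}<0$, so already the first step $f^{\Delta^{n-2}}(t)\geq\int_{s}^{t}f^{\Delta^{n-1}}(\tau)\,\Delta\tau$ goes the wrong way. A concrete instance is $\T=\R$, $n=3$, $f(t)=1+\mathrm{e}^{-t}$, where $m=0$, $f''>0$, yet $f'(s)<0$. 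The paper circumvents this by splitting the Taylor expansion at level $m$: it expands $f(t)$ forward from $s$ up to order $m$ (so only the genuinely positive boundary terms $f^{\Delta^{k}}(s)$, $k<m$, are dropped), then expands $f^{\Delta^{m}}(\eta)$ backward from $t$ so that the alternating signs in \ref{tllm1it2} all contribute nonnegatively, and finally applies the monomial inequality of Lemma~\ref{pilm3} to collapse the resulting kernel $\int_{s}^{t}\hf{m-1}\bigl(t,\sigma(\eta)\bigr)\hf{n-m-1}(\eta,t)\,\Delta\eta$ down to $\hf{n-1}(t,s)$.
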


%%%

To prove the dynamic generalization of \hyperref[pithm1]{Philos' inequality},
we need a series of lemmas.

%%%

\begin{remark}\label{pirmk1}
Let $\T$ be a time scale with a linear forward jump, i.e., $\sigma(t):=qt+h$ for $t\in\T$, where $q\in[1,\infty)_{\R}$ and $h\in\R_{0}^{+}$.
By induction, one can prove the following two properties.
\begin{enumerate}[label={(P\arabic*)},leftmargin={*},align={left},ref={(P\arabic*)}]
\setcounter{enumi}{0}
\item\label{pirmk1p1} $\hf{n}(t,s)=\frac{1}{\Gamma_{q}(n)}\prod_{i=0}^{n-1}\bigl(t-\sigma^{i}(s)\bigr)$ for $s,t\in\T$ and $n\in\N_{0}$,
    where $\Gamma_{q}$ is the $q$-Gamma function defined by
$\Gamma_{q}(n):=\lim_{\lambda\to{}q}\prod_{i=1}^{n-1}\frac{\lambda^{i}-1}{\lambda-1}$ for $n\in\N$.
\item\label{pirmk1p2} $\hf{n}(t,s)=(-1)^{n}q^{\frac{n(n-1)}{2}}\hf{n}\bigl(s,\rho^{n-1}(t)\bigr)$ for $s,t\in\T$ and $n\in\N_{0}$.
\end{enumerate}
It follows from \ref{pirmk1p1} and \ref{pirmk1p2} that
\begin{equation}
\lim_{t\to\infty}\frac{\hf{n}(t,s)}{t^{n}}=\frac{1}{\Gamma_{q}(n)}
\quad\text{and}\quad
\lim_{s\to\infty}\frac{\hf{n}(t,s)}{s^{n}}=(-1)^{n}\frac{q^{\frac{n(n-1)}{2}}}{\Gamma_{q}(n)}
\quad\text{for}\ s,t\in\T\ \text{and}\ n\in\N_{0}.\notag
\end{equation}
\end{remark}

\begin{remark}\label{pirmk2}
First, for the case $\T=\R$, \eqref{pithm1eq1} reads as
\begin{equation}
f(t)\geq\frac{(t-s)^{n-1}}{(n-1)!}f^{(n-1)}(t)
\quad\text{for all}\ t\in[s,\infty)_{\R}.\notag
\end{equation}
Next, for the case $\T=\Z$, \eqref{pithm1eq1} reduces to
\begin{equation}
f(t)\geq\frac{(t-s)^{(n-1)}}{(n-1)!}\Delta^{n-1}f(t)
\quad\text{for all}\ t\in[s,\infty)_{\Z},\notag
\end{equation}
where $^{(\cdot)}$ denotes the falling factorial function and $\Delta$ is the difference operator.
As $\Delta^{n-1}f$ is nonincreasing on $[s,\infty)_{\Z}$,
we have $\Delta^{n-1}f(t)\geq\Delta^{n-1}f(2^{n-m-1}t)$ for all $t\in[s,\infty)_{\Z}$.
Therefore, \hyperref[pithm1]{dynamic Philos' inequality} improves \cite[Corollary~1.8.12]{MR1155840} even in the particular case $\T=\Z$.
Finally, for the case $\T=q^{\Z}\cup\{0\}$, \eqref{pithm1eq1} becomes
\begin{equation}
f(t)\geq\prod_{i=0}^{n-1}\frac{t-q^{i}s}{\sum_{j=0}^{i}q^{j}}\mathrm{D}_{q}^{n-1}f(t)
\quad\text{for all}\ t\in[s,\infty)_{\T},\notag
\end{equation}
where $\mathrm{D}_{q}$ is the $q$-difference operator (see Table~\ref{tsetbl2} and Table~\ref{tsptbl1}).
\end{remark}

%%%

\begin{lemma}\label{pilm1}
If $k\in\N_{0}$ and $s\in\T$, then
\begin{equation}
(-1)^{k}\hf{k}(s,t)\geq\hf{k}(t,s)
\quad\text{for all}\ t\in[s,\infty)_{\T}.\notag
\end{equation}
\end{lemma}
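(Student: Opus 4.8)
The plan is to induct on $k\in\N_{0}$, using the two standard integral representations of the Taylor monomials on time scales, namely $\hf{j+1}(a,b)=\int_{b}^{a}\hf{j}(\tau,b)\Delta\tau$ and $\hf{j+1}(a,b)=\int_{b}^{a}\hf{j}(a,\sigma(\tau))\Delta\tau$, together with the elementary facts that $\hf{j}(\cdot,s)\geq0$ and is nondecreasing on $[s,\infty)_{\T}$ (its first-variable $\Delta$-derivative being $\hf{j-1}(\cdot,s)\geq0$); these are all recorded in the appendix.

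The base case $k=0$ is immediate since both sides equal $1$. For the inductive step I would assume $(-1)^{k}\hf{k}(s,u)\geq\hf{k}(u,s)$ for every $u\in[s,\infty)_{\T}$ and fix $t\in[s,\infty)_{\T}$. Writing $\hf{k+1}(s,t)=\int_{t}^{s}\hf{k}(s,\sigma(\tau))\Delta\tau=-\int_{s}^{t}\hf{k}(s,\sigma(\tau))\Delta\tau$ via the second representation (with the arguments interchanged) gives $(-1)^{k+1}\hf{k+1}(s,t)=(-1)^{k}\int_{s}^{t}\hf{k}(s,\sigma(\tau))\Delta\tau$, while the first representation gives $\hf{k+1}(t,s)=\int_{s}^{t}\hf{k}(\tau,s)\Delta\tau$. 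It then suffices to establish the pointwise estimate $(-1)^{k}\hf{k}(s,\sigma(\tau))\geq\hf{k}(\tau,s)$ for $\tau\in[s,t)_{\T}$ and integrate.

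To obtain that pointwise estimate I would note that for $\tau\in[s,t)_{\T}$ one has $\sigma(\tau)\geq\tau\geq s$, so the induction hypothesis at $u=\sigma(\tau)$ yields $(-1)^{k}\hf{k}(s,\sigma(\tau))\geq\hf{k}(\sigma(\tau),s)$, and the monotonicity of $\hf{k}(\cdot,s)$ yields $\hf{k}(\sigma(\tau),s)\geq\hf{k}(\tau,s)$; chaining these gives the claim. Since the difference of the two sides is rd-continuous and nonnegative on $[s,t)_{\T}$ and $s\leq t$, $\Delta$-integrating over $[s,t)_{\T}$ preserves the inequality and produces $(-1)^{k+1}\hf{k+1}(s,t)\geq\hf{k+1}(t,s)$, completing the induction.

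The one point that needs care is the sign and orientation bookkeeping in passing between $\hf{k+1}(s,t)$ and $\hf{k+1}(t,s)$: the decisive feature is that the reversed monomial, once written as an integral over the correctly oriented interval $[s,t)_{\T}$, carries the jump point $\sigma(\tau)$ rather than $\tau$ in its argument, and it is exactly this shift — invisible when $\T=\R$ and responsible for the equality there — that furnishes the extra room over the forward monomial. Beyond that, the argument only uses the nonnegativity and monotonicity of the Taylor monomials, so I expect no real difficulty.
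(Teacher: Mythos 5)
Your proof is correct and follows essentially the same route as the paper's: induction on $k$, writing $(-1)^{k+1}\hf{k+1}(s,t)$ as $(-1)^{k}\int_{s}^{t}\hf{k}(s,\sigma(\eta))\Delta\eta$ via the representation \eqref{tspeq3}, applying the induction hypothesis pointwise at $\sigma(\eta)$, and then using the monotonicity of $\hf{k}(\cdot,s)$ from Property~\ref{tspprp1} to pass from $\hf{k}(\sigma(\eta),s)$ to $\hf{k}(\eta,s)$ before integrating. No differences worth noting.
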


\begin{proof}
The proof is trivial if $k=0$.
Assume that the claim is true for some $k\in\N_{0}$.
By Property~\ref{tspprp1}, we have
\begin{align}
(-1)^{k+1}\hf{k+1}(s,t)={}&(-1)^{k+1}\int_{t}^{s}\hf{k}\bigl(s,\sigma(\eta)\bigr)\Delta\eta
=(-1)^{k}\int_{s}^{t}\hf{k}\bigl(s,\sigma(\eta)\bigr)\Delta\eta\notag\\
\geq{}&\int_{s}^{t}\hf{k}\bigl(\sigma(\eta),s\bigr)\Delta\eta
\geq\int_{s}^{t}\hf{k}(\eta,s)\Delta\eta
=\hf{k+1}(t,s)\notag
\end{align}
for all $t\in[s,\infty)_{\T}$.
This shows that the inequality is also true when $k$ is replaced with $(k+1)$.
By mathematical induction, we justify the validity of the inequality for all $k\in\N_{0}$.
\end{proof}

%%%

\begin{lemma}\label{pilm2}
If $k,\ell\in\N_{0}$ and $s\in\T$, then
\begin{equation}
\hf{k}(t,s)\hf{\ell}(t,s)\geq\hf{k+\ell}(t,s)
\quad\text{for all}\ t\in[s,\infty)_{\T}.\notag
\end{equation}
\end{lemma}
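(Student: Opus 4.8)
The plan is to fix $\ell\in\N_{0}$ and induct on $k$, using the standard device of showing that the difference of the two sides vanishes at $t=s$ and has a nonnegative $\Delta$-derivative on $[s,\infty)_{\T}$. First I would dispose of the case $\ell=0$, where the asserted inequality is in fact the identity $\hf{k}(t,s)\hf{0}(t,s)=\hf{k}(t,s)=\hf{k+0}(t,s)$, since $\hf{0}(\cdot,s)\equiv1$; so from now on I may assume $\ell\geq1$. The base case $k=0$ is handled in the same way, because $\hf{0}(t,s)\hf{\ell}(t,s)=\hf{\ell}(t,s)=\hf{0+\ell}(t,s)$.

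For the inductive step I would assume that $\hf{k}(t,s)\hf{\ell}(t,s)\geq\hf{k+\ell}(t,s)$ holds for all $t\in[s,\infty)_{\T}$ and some $k\in\N_{0}$, and set
\[
g(t):=\hf{k+1}(t,s)\hf{\ell}(t,s)-\hf{k+\ell+1}(t,s)\qquad\text{for}\ t\in[s,\infty)_{\T}.
\]
Since $\hf{j}(s,s)=0$ for every $j\in\N$, one has $g(s)=0$. Differentiating with the time scales product rule in the form $(uv)^{\Delta}=u^{\Delta}v+u^{\sigma}v^{\Delta}$ and using the recursion $\hf{j}^{\Delta}(\cdot,s)=\hf{j-1}(\cdot,s)$ yields
\[
g^{\Delta}(t)=\hf{k}(t,s)\hf{\ell}(t,s)+\hf{k+1}\bigl(\sigma(t),s\bigr)\hf{\ell-1}(t,s)-\hf{k+\ell}(t,s).
\]
Here the first summand is $\geq\hf{k+\ell}(t,s)$ by the induction hypothesis, and the middle summand is $\geq0$ because $\hf{i}(\tau,s)\geq0$ whenever $\tau\geq{}s$ and, in particular, $\sigma(t)\geq{}t\geq{}s$; hence $g^{\Delta}(t)\geq0$ on $[s,\infty)_{\T}$. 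Since a function with nonnegative $\Delta$-derivative on $[s,\infty)_{\T}$ is nondecreasing there, we get $g(t)\geq{}g(s)=0$, that is, $\hf{k+1}(t,s)\hf{\ell}(t,s)\geq\hf{k+\ell+1}(t,s)$, which closes the induction.

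I do not expect a genuine obstacle here: once the recursion $\hf{n}^{\Delta}=\hf{n-1}$, the boundary value $\hf{n}(s,s)=0$, and the nonnegativity of the monomials $\hf{n}(\cdot,s)$ on $[s,\infty)_{\T}$ are taken for granted (these are among the properties of the time scales polynomials recorded in the appendix), the argument is pure bookkeeping. The only point that really needs care is the choice of which form of the product rule to use, so that the factor surviving the forward shift, namely $\hf{k+1}(\sigma(t),s)$, stays manifestly nonnegative; no analytic estimate enters the proof.
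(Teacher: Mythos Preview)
Your argument is correct, but it is a genuinely different route from the paper's. The paper does not induct: it invokes the convolution identity (Lemma~\ref{tsplm2})
\[
\hf{k+\ell}(t,s)=\int_{s}^{t}\hf{k-1}\bigl(t,\sigma(\eta)\bigr)\hf{\ell}(\eta,s)\,\Delta\eta,
\]
then uses that $\hf{\ell}(\cdot,s)$ is increasing on $[s,\infty)_{\T}$ to pull $\hf{\ell}(\eta,s)\leq\hf{\ell}(t,s)$ out of the integral, leaving $\int_{s}^{t}\hf{k-1}(t,\sigma(\eta))\,\Delta\eta=\hf{k}(t,s)$ by \eqref{tspeq3}. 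That is a two-line proof, but it leans on Lemma~\ref{tsplm2}, which is imported from another reference and is itself a nontrivial identity. Your induction-plus-product-rule approach is slightly longer but more self-contained: it needs only the recursion $\hf{j}^{\Delta}(\cdot,s)=\hf{j-1}(\cdot,s)$, the boundary values $\hf{j}(s,s)=0$, and nonnegativity of $\hf{j}(\cdot,s)$ on $[s,\infty)_{\T}$, all of which are immediate from the definition. Your care in choosing the form of the product rule so that the shifted factor $\hf{k+1}(\sigma(t),s)$ is the one whose sign is obvious is exactly the right move.
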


\begin{proof}
The proof is obvious if $k=0$ or $\ell=0$.
Hence, we let $k,\ell\in\N$ below.
By Lemma~\ref{tsplm2}, we have
\begin{equation}
\hf{k+\ell}(t,s)=\int_{s}^{t}\hf{k-1}\bigl(t,\sigma(\eta)\bigr)\hf{\ell}(\eta,s)\Delta\eta
\quad\text{for all}\ t\in[s,\infty)_{\T}.\notag
\end{equation}
It follows from Property~\ref{tspprp1} that $\hf{\ell}(\cdot,s)$ is increasing on $[s,\infty)_{\T}$, which yields
\begin{equation}
\hf{k+\ell}(t,s)
\leq\biggl(\int_{s}^{t}\hf{k-1}\bigl(t,\sigma(\eta)\bigr)\Delta\eta\biggr)\hf{\ell}(t,s)
=\hf{k}(t,s)\hf{\ell}(t,s)
\quad\text{for all}\ t\in[s,\infty)_{\T},\notag
\end{equation}
where we have used \eqref{tspeq3} in the last step.
\end{proof}

%%%

\begin{lemma}\label{pilm3}
If $k\in\N$, $\ell\in\N_{0}$ and $s\in\T$, then
\begin{equation}
(-1)^{\ell}\int_{s}^{t}\hf{k-1}\bigl(t,\sigma(\eta)\bigr)\hf{\ell}(\eta,t)\Delta\eta\geq\hf{k+\ell}(t,s)
\quad\text{for}\ t\in[s,\infty)_{\T}.\notag
\end{equation}
\end{lemma}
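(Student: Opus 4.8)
The plan is to reduce the claim to a single pointwise inequality between the integrands and then feed it into the ``product'' estimate of Lemma~\ref{pilm2} together with the identity \eqref{tspeq3}; no induction on $\ell$ or Fubini exchange should be needed. Fix $t\in[s,\infty)_{\T}$. Since the $\Delta$-integral over $[s,t)_{\T}$ only involves values of its integrand on $[s,t)_{\T}$, it suffices to establish the relevant pointwise bound for $\eta\in[s,t)_{\T}$; for such $\eta$ one has $\sigma(\eta)\leq{}t$, so that $\hf{k-1}\bigl(t,\sigma(\eta)\bigr)\geq0$ and $\hf{\ell}(t,\eta),\hf{\ell}\bigl(t,\sigma(\eta)\bigr)\geq0$ by Property~\ref{tspprp1}.

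First I would record the second-variable monotonicity $\hf{\ell}(t,\eta)\geq\hf{\ell}\bigl(t,\sigma(\eta)\bigr)$. For $\ell=0$ this is the identity $1=1$; for $\ell\geq1$, applying \eqref{tspeq3} (with index $\ell$) at the lower limits $\eta$ and $\sigma(\eta)$ and subtracting gives
\[
\hf{\ell}(t,\eta)-\hf{\ell}\bigl(t,\sigma(\eta)\bigr)=\int_{\eta}^{\sigma(\eta)}\hf{\ell-1}\bigl(t,\sigma(\xi)\bigr)\Delta\xi=\mu(\eta)\hf{\ell-1}\bigl(t,\sigma(\eta)\bigr)\geq0.
\]

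The crux is then the pointwise chain, valid for $\eta\in[s,t)_{\T}$: by Lemma~\ref{pilm1}, applied with the pair $(\eta,t)$ in the roles of $(s,t)$ and with index $\ell$, one has $(-1)^{\ell}\hf{\ell}(\eta,t)\geq\hf{\ell}(t,\eta)$. Multiplying through by $\hf{k-1}\bigl(t,\sigma(\eta)\bigr)\geq0$, then invoking the monotonicity $\hf{\ell}(t,\eta)\geq\hf{\ell}\bigl(t,\sigma(\eta)\bigr)$ from the previous paragraph, and finally applying Lemma~\ref{pilm2} with base point $\sigma(\eta)$ and indices $k-1$ and $\ell$, we obtain
\[
(-1)^{\ell}\hf{k-1}\bigl(t,\sigma(\eta)\bigr)\hf{\ell}(\eta,t)\geq\hf{k-1}\bigl(t,\sigma(\eta)\bigr)\hf{\ell}\bigl(t,\sigma(\eta)\bigr)\geq\hf{k+\ell-1}\bigl(t,\sigma(\eta)\bigr).
\]
Integrating this inequality over $[s,t)_{\T}$ and using \eqref{tspeq3} once more (with index $k+\ell$) yields
\[
(-1)^{\ell}\int_{s}^{t}\hf{k-1}\bigl(t,\sigma(\eta)\bigr)\hf{\ell}(\eta,t)\Delta\eta\geq\int_{s}^{t}\hf{k+\ell-1}\bigl(t,\sigma(\eta)\bigr)\Delta\eta=\hf{k+\ell}(t,s),
\]
which is the assertion.

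I do not anticipate a genuinely hard step: the proof is a short composition of positivity and monotonicity facts with Lemmas~\ref{pilm1} and~\ref{pilm2}. The few points that require care are (i) verifying that restricting to $\eta\in[s,t)_{\T}$ is harmless, so that $\sigma(\eta)\leq{}t$ and the values $\hf{k-1}\bigl(t,\sigma(\eta)\bigr)$, $\hf{\ell}(t,\eta)$, $\hf{\ell}\bigl(t,\sigma(\eta)\bigr)$ are all nonnegative; (ii) bookkeeping which base point and which indices are substituted into Lemmas~\ref{pilm1} and~\ref{pilm2}; and (iii) the degenerate cases $\ell=0$ (where Lemma~\ref{pilm1} and the second-variable monotonicity reduce to equalities) and $k=1$ (where $\hf{k-1}\equiv1$), in which the whole chain collapses to equalities and simply reproduces \eqref{tspeq3}.
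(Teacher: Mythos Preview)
Your argument is correct. The pointwise chain
\[
(-1)^{\ell}\hf{\ell}(\eta,t)\;\geq\;\hf{\ell}(t,\eta)\;\geq\;\hf{\ell}\bigl(t,\sigma(\eta)\bigr)
\]
is valid for $\eta\in[s,t)_{\T}$ (the first inequality is exactly Lemma~\ref{pilm1} at the pair $(\eta,t)$; the second is your monotonicity observation via \eqref{tspeq3}), and after multiplying by $\hf{k-1}\bigl(t,\sigma(\eta)\bigr)\geq0$ and invoking Lemma~\ref{pilm2} at the base point $\sigma(\eta)$, integration and \eqref{tspeq3} finish the proof. The reduction to $\eta\in[s,t)_{\T}$ is indeed harmless, since the $\Delta$-integral $\int_{s}^{t}$ is insensitive to the value of the integrand at $\eta=t$.

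Your route, however, differs from the paper's. The paper does not work pointwise in $\eta$: it expands $\hf{\ell}(\eta,t)=\int_{t}^{\eta}\hf{\ell-1}(\zeta,t)\Delta\zeta$, splits the inner integral as $\int_{t}^{s}+\int_{s}^{\eta}$, and thereby decomposes the whole expression into the single product $(-1)^{\ell}\hf{k}(t,s)\hf{\ell}(s,t)$ (with the indexing implicit in their computation) plus a remainder which is seen to be nonnegative by sign considerations. Only then are Lemma~\ref{pilm1} and Lemma~\ref{pilm2} applied once, at the fixed endpoint $s$. In other words, the paper collapses the integral to a product evaluated at $s$ before estimating, whereas you estimate under the integral sign at each $\sigma(\eta)$ and then integrate. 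Your approach is shorter and avoids the decomposition step entirely; the paper's approach, on the other hand, isolates the intermediate bound $\hf{k}(t,s)\hf{\ell}(t,s)$, which can be of independent interest. Both reach the same final inequality.
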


\begin{proof}
The claim holds with equality for $\ell=0$ by \eqref{tspeq3}.
Below, we will consider the case where $\ell\in\N$.
Let $k,\ell\in\N$, then we have
\begin{align}
(-1)^{\ell}&\int_{s}^{t}\hf{k-1}\bigl(t,\sigma(\eta)\bigr)\hf{\ell}(\eta,t)\Delta\eta\notag\\
={}&(-1)^{\ell}\int_{s}^{t}\hf{k}\bigl(t,\sigma(\eta)\bigr)\biggl(\int_{t}^{\eta}\hf{\ell-1}(\zeta,t)\Delta\zeta\biggr)\Delta\eta\notag\\
={}&(-1)^{\ell}\int_{s}^{t}\hf{k}\bigl(t,\sigma(\eta)\bigr)\biggl(\int_{t}^{s}\hf{\ell-1}(\zeta,t)\Delta\zeta
-\int_{s}^{\eta}\hf{\ell-1}(\zeta,t)\Delta\zeta\biggr)\Delta\eta\notag\\
={}&(-1)^{\ell}\int_{s}^{t}\hf{k}\bigl(t,\sigma(\eta)\bigr)\biggl(\hf{\ell}(s,t)-\int_{s}^{\eta}\hf{\ell-1}(\zeta,t)\Delta\zeta\biggr)\Delta\eta\notag\\
={}&(-1)^{\ell}\hf{k+1}(t,s)\hf{\ell}(s,t)+(-1)^{\ell-1}\int_{s}^{t}\hf{k}\bigl(t,\sigma(\eta)\bigr)\int_{s}^{\eta}\hf{\ell-1}(\zeta,t)\Delta\zeta\Delta\eta\notag
\end{align}
for all $t\in[s,\infty)_{\T}$.
Considering Property~\ref{tspprp1}, we learn that the last term above is nonnegative.
Thus, we have
\begin{align}
(-1)^{\ell}\int_{s}^{t}\hf{k}\bigl(t,\sigma(\eta)\bigr)\hf{\ell}(\eta,t)\Delta\eta
\geq{}&(-1)^{\ell}\hf{k+1}(t,s)\hf{\ell}(s,t)
\geq\hf{k+1}(t,s)\hf{\ell}(t,s)\notag\\
\geq{}&\hf{k+\ell+1}(t,s)\notag
\end{align}
for all $t\in[s,\infty)_{\T}$.
Note that we have applied Lemma~\ref{pilm1} and Lemma~\ref{pilm2} in the first and the second steps above, respectively.
Thus, this completes the proof.
\end{proof}

%%%

Now, we have prepared all tools required for the proof of Theorem~\ref{pithm1}.

%%%

\begin{proof}[{\proofname} of {\protect\hyperref[pithm1]{dynamic Philos' inequality}}]
Using \hyperref[tsplm1]{Taylor's formula}, Lemma~\ref{tllm1}\,\ref{tllm1it1} and Property~\ref{tspprp1}, we have
\begin{align}
f(t)={}&\sum_{k=0}^{m-1}\hf{k}(t,s)f^{\Delta^{k}}(s)
+\int_{s}^{t}\hf{m-1}\bigl(t,\sigma(\eta)\bigr)f^{\Delta^{m}}(\eta)\Delta\eta\notag\\
\geq{}&\int_{s}^{t}\hf{m-1}\bigl(t,\sigma(\eta)\bigr)f^{\Delta^{m}}(\eta)\Delta\eta\label{pithm1prfeq1}
\end{align}
for all $t\in[s,\infty)_{\T}$.
Noting that $(n-m-1)$ is even, we obtain by Lemma~\ref{tllm1}\,\ref{tllm1it2} that
\begin{align}
f^{\Delta^{m}}(s)={}&\sum_{k=0}^{n-m-1}\hf{k}(s,t)f^{\Delta^{m+k}}(t)
+\int_{t}^{s}\hf{n-m-1}\bigl(s,\sigma(\eta)\bigr)f^{\Delta^{n}}(\eta)\Delta\eta\notag\\
={}&\sum_{k=0}^{n-m-1}(-1)^{k}\hf{k}(s,t)(-1)^{k}f^{\Delta^{m+k}}(t)\notag\\
&+\int_{s}^{t}\hf{n-m-1}\bigl(s,\sigma(\eta)\bigr)\bigl(-f^{\Delta^{n}}(\eta)\bigr)\Delta\eta\notag\\
\geq{}&\hf{n-m-1}(s,t)f^{\Delta^{n-1}}(t)\label{pithm1prfeq2}
\end{align}
for all $t\in[s,\infty)_{\T}$.
Substituting \eqref{pithm1prfeq2} into \eqref{pithm1prfeq1} gives us
\begin{equation}
f(t)\geq\biggl(\int_{s}^{t}\hf{m-1}\bigl(t,\sigma(\eta))\hf{n-m-1}(\eta,t)\Delta\eta\biggr)f^{\Delta^{n-1}}(t)
\quad\text{for}\ t\in[s,\infty)_{\T},\notag
\end{equation}
which completes the proof by an application of Lemma~\ref{pilm3}.
\end{proof}

%%%

Now, we have the following corollary of \hyperref[pithm1]{dynamic Philos' inequality}.

%%%

\begin{corollary}\label{picrl1}
Assume that $\sup\T=\infty$, $n\in\N$ and $f\in\crd{n}([t_{0},\infty)_{\T},\R_{0}^{+})$ with $f^{\Delta^{n}}\leq0$ on $[t_{0},\infty)_{\T}$.
If $\lim_{t\to\infty}f(t)\neq0$, then for every $\lambda\in(0,1)_{\R}$ there exists $r\in[s,\infty)_{\T}$ such that
\begin{equation}
f(t)\geq\lambda\hf{n-1}(t,t_{0})f^{\Delta^{n-1}}(t)
\quad\text{for all}\ t\in[r,\infty)_{\T},\notag
\end{equation}
where $s\in[t_{0},\infty)_{\T}$ is defined as in \hyperref[tllm1]{Kiguradze's lemma}.
\end{corollary}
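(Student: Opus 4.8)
The plan is to upgrade the dynamic Philos' inequality (Theorem~\ref{pithm1}), which is anchored at the Kiguradze point $s$, to one anchored at $t_{0}$, at the sole cost of the factor $\lambda$. The degenerate situations are immediate: if $n=1$ then $\hf{0}(\cdot,t_{0})\equiv1$ and $f^{\Delta^{n-1}}=f$, so the claim reads $f(t)\geq\lambda{}f(t)$, true since $\lambda\in(0,1)_{\R}$ and $f\geq0$; and if $f^{\Delta^{n}}\equiv0$ then $f$ is a time scales polynomial with $f^{\Delta^{n-1}}$ equal to a constant $c\geq0$ (nonnegativity being forced by $f\geq0$), and the claim follows at once from $c=0$ or, when $c>0$, from $f(t)/\hf{n-1}(t,t_{0})\to{}c$, which is a consequence of the growth estimate below. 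Henceforth I assume $n\in[2,\infty)_{\Z}$ and $f^{\Delta^{n}}\not\equiv0$, so that $s$ and the Kiguradze number $m$ are available.

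For such $f$, Theorem~\ref{pithm1} gives
\begin{equation}
f(t)\geq\hf{n-1}(t,s)f^{\Delta^{n-1}}(t)
\qquad\text{for all }t\in[s,\infty)_{\T},\notag
\end{equation}
and here $f^{\Delta^{n-1}}(t)\geq0$ on $[s,\infty)_{\T}$, since by \hyperref[tllm1]{Kiguradze's lemma}\,\ref{tllm1it2} we have $(-1)^{m+n-1}f^{\Delta^{n-1}}(t)>0$ there while $n-m$ is odd (the very parity fact used in the proof of Theorem~\ref{pithm1}), whence $(-1)^{m+n-1}=1$. Consequently it is enough to exhibit $r\in[s,\infty)_{\T}$ with $\hf{n-1}(t,s)\geq\lambda\hf{n-1}(t,t_{0})$ for all $t\in[r,\infty)_{\T}$ and then multiply by the nonnegative quantity $f^{\Delta^{n-1}}(t)$.

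Everything therefore hinges on the limit
\begin{equation}
\lim_{t\to\infty}\frac{\hf{n-1}(t,s)}{\hf{n-1}(t,t_{0})}=1,\notag
\end{equation}
after which $\lambda<1$ furnishes the needed $r$. To prove it I would apply \hyperref[tsplm1]{Taylor's formula} to the polynomial $\hf{n-1}(\cdot,t_{0})$ expanded about $s$, obtaining
\begin{equation}
\hf{n-1}(t,t_{0})=\sum_{k=0}^{n-1}\hf{k}(t,s)\hf{n-1-k}(s,t_{0})=\hf{n-1}(t,s)+\sum_{k=0}^{n-2}\hf{k}(t,s)\hf{n-1-k}(s,t_{0}),\notag
\end{equation}
and then use that $\hf{k}(t,s)=o\bigl(\hf{n-1}(t,s)\bigr)$ as $t\to\infty$ for every $k<n-1$, which makes the correction sum negligible beside $\hf{n-1}(t,s)\to\infty$. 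The estimate $\hf{k}(t,s)=o(\hf{n-1}(t,s))$ telescopes to $\hf{j}(t,s)/\hf{j-1}(t,s)\to\infty$, which one proves by induction on $j$ from $\hf{j}(t,s)=\int_{s}^{t}\hf{j-1}(\tau,s)\Delta\tau$: splitting the integral at a point beyond which $\hf{j-1}(\cdot,s)\geq{}M\hf{j-2}(\cdot,s)$ and using that $\hf{j-1}(\cdot,s)$ is increasing yields $\hf{j}(t,s)\geq{}M\hf{j-1}(t,s)-O(1)$, while the base case $j=1$ is $\hf{1}(t,s)/\hf{0}(t,s)=t-s\to\infty$. If the appendix already records this asymptotic for time scales polynomials, one simply quotes it.

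The main obstacle is exactly this ratio limit: on a time scale with large graininess $\hf{n-1}(t,\cdot)$ need not grow like $t^{n-1}$, so the crude bound $\hf{n-1}(t,s)\leq(t-s)^{n-1}/(n-1)!$ is too lossy to compare $\hf{n-1}(t,s)$ directly with $\hf{n-1}(t,t_{0})$; passing through the Taylor identity recasts the comparison as the robust statement $\hf{k}(t,s)=o(\hf{n-1}(t,s))$. Once that is in hand, the remaining steps — selecting $r$, the sign of $f^{\Delta^{n-1}}$, and the degenerate cases — are routine.
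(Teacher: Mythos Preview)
Your argument for $m\in[1,n)_{\Z}$ is essentially the paper's own (and you actually justify the asymptotic $\hf{n-1}(\cdot,t_{0})\sim\hf{n-1}(\cdot,s)$, which the paper only states). The gap is the case $m=0$, which occurs when $n$ is odd. You invoke Theorem~\ref{pithm1} unconditionally, but its proof uses $\hf{m-1}$ in \eqref{pithm1prfeq1} and Lemma~\ref{pilm3} with $k=m\in\N$, so it does not cover $m=0$; in fact the inequality \eqref{pithm1eq1} can fail there. On $\T=\R$ with $n=3$, $t_{0}=0$ and $f(t)=\tfrac{1}{10}+\mathrm{e}^{-t}$, the Kiguradze conditions hold from $s=0$ with $m=0$, yet at $t=2$ one has $f(2)=\tfrac{1}{10}+\mathrm{e}^{-2}<2\mathrm{e}^{-2}=\hf{2}(2,0)f''(2)$. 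Hence your chain $f(t)\geq\hf{n-1}(t,s)f^{\Delta^{n-1}}(t)\geq\lambda\hf{n-1}(t,t_{0})f^{\Delta^{n-1}}(t)$ breaks at the first step when $m=0$.

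The tell is that your main argument never uses the hypothesis $\lim_{t\to\infty}f(t)\neq0$; this is exactly what salvages the $m=0$ case. The paper treats it separately: setting $L:=\lim_{t\to\infty}f(t)>0$, it chooses $r\geq s$ so large that $f(r)\leq L/\sqrt{\lambda}$ (hence $f(t)\geq L\geq\sqrt{\lambda}\,f(r)$ for $t\geq r$) and $\hf{n-1}(t,r)\geq\sqrt{\lambda}\,\hf{n-1}(t,t_{0})$ for $t\geq r$. It then uses only the second step \eqref{pithm1prfeq2} from the proof of Theorem~\ref{pithm1} (which is valid for $m=0$), together with Lemma~\ref{pilm1} and the fact that $n-1$ is even, to obtain $f(r)\geq\hf{n-1}(r,t)f^{\Delta^{n-1}}(t)\geq\hf{n-1}(t,r)f^{\Delta^{n-1}}(t)$. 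Chaining these yields $f(t)\geq\sqrt{\lambda}\,f(r)\geq\sqrt{\lambda}\,\hf{n-1}(t,r)f^{\Delta^{n-1}}(t)\geq\lambda\,\hf{n-1}(t,t_{0})f^{\Delta^{n-1}}(t)$. You need to insert this (or an equivalent) argument for $m=0$.
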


\begin{proof}
If $m\in[1,n)_{\Z}$, then the proof follows from \hyperref[pithm1]{dynamic Philos' inequality} since $\hf{n-1}(\cdot,t_{0})\sim\hf{n-1}(\cdot,s)$, i.e., $\lim_{t\to\infty}\frac{\hf{n-1}(t,t_{0})}{\hf{n-1}(t,s)}=1$.
To complete the proof, we consider the case where $m=0$.
This case is possible only when $n\in\N$ is odd.
Let $L:=\lim_{t\to\infty}f(t)$.
Since $L>0$ by the assumption, for any $\lambda\in(0,1)_{\R}$ (if and only if $\sqrt{\lambda}\in(0,1)_{\R}$),
we may find $r\in[s,\infty)_{\T}$ such that $f(r)\leq\frac{L}{\sqrt{\lambda}}$ and $\frac{\hf{n-1}(t,t_{0})}{\hf{n-1}(t,r)}\leq\frac{1}{\sqrt{\lambda}}$ for all $t\in[r,\infty)_{\T}$.
Then, we have
\begin{equation}
f(r)\geq{}f(t)\geq{}L\geq\sqrt{\lambda}f(r)
\quad\text{for all}\ t\in[r,\infty)_{\T}\label{picrl1prfeq1}
\end{equation}
and
\begin{equation}
\hf{n-1}(t,r)\geq\sqrt{\lambda}\hf{n-1}(t,t_{0})
\quad\text{for all}\ t\in[r,\infty)_{\T}.\label{picrl1prfeq2}
\end{equation}
Since $(n-1)$ is even, it follows from \eqref{pithm1prfeq2} and Lemma~\ref{pilm1} that
\begin{equation}
f(r)\geq\hf{n-1}(r,t)f^{\Delta^{n-1}}(t)\geq\hf{n-1}(t,r)f^{\Delta^{n-1}}(t)
\quad\text{for all}\ t\in[r,\infty)_{\T},\notag
\end{equation}
which yields by combining with \eqref{picrl1prfeq1} and \eqref{picrl1prfeq2} that
\begin{align}
f(t)\geq{}&\sqrt{\lambda}f(r)
\geq\sqrt{\lambda}\hf{n-1}(t,r)f^{\Delta^{n-1}}(t)\notag\\
\geq{}&\lambda\hf{n-1}(t,t_{0})f^{\Delta^{n-1}}(t)\notag
%\quad\text{for all}\ t\in[r,\infty)_{\T}.
\end{align}
for all $t\in[r,\infty)_{\T}$.
This completes the proof.
\end{proof}

%%%

\subsection{Recent results}\label{rr}

%%%

In this subsection, we give some recent results on delay dynamic equations of higher order.
Consider the delay dynamic inequality
\begin{equation}
x^{\Delta^{n}}(t)+B(t)x\bigl(\beta(t)\bigr)\leq0
\quad\text{for}\ t\in[t_{0},\infty)_{\T}\label{deeq1}
\end{equation}
and the corresponding equation
\begin{equation}
x^{\Delta^{n}}(t)+B(t)x\bigl(\beta(t)\bigr)=0
\quad\text{for}\ t\in[t_{0},\infty)_{\T}.\label{deeq2}
\end{equation}

%%%

To be able to extract the next corollary from the following theorem quoted from \cite{MR3091942},
we will give it below with a corrected proof.

%%%

\begin{theorem}[\protect{\cite[Theorem~1]{MR3091942}}]\label{drthm1}
The following statements are equivalent.
\begin{enumerate}[label={(\roman*)},leftmargin={*},align={left},ref={(\roman*)}]
\setcounter{enumi}{0}
\item\label{drthm1it1} The inequality \eqref{deeq1} has an eventually positive solution.
\item\label{drthm1it2} The equation \eqref{deeq2} is nonoscillatory.
\end{enumerate}
\end{theorem}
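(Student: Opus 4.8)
The plan is to prove the two implications of the equivalence separately, the substance of the proof lying entirely in \ref{drthm1it1}$\Rightarrow$\ref{drthm1it2}. For \ref{drthm1it2}$\Rightarrow$\ref{drthm1it1} I would simply note that a nonoscillatory solution $x$ of \eqref{deeq2} is eventually of one sign and, \eqref{deeq2} being linear and homogeneous, so is $-x$; hence after replacing $x$ by $-x$ if necessary we may assume $x>0$ on some $[s,\infty)_\T$, and then $x$ satisfies \eqref{deeq1} there with equality. For \ref{drthm1it1}$\Rightarrow$\ref{drthm1it2} I would construct an eventually positive solution of \eqref{deeq2} from an eventually positive solution $x$ of \eqref{deeq1}. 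If $B\equiv0$ near $\infty$ then \eqref{deeq2} reduces to $x^{\Delta^{n}}=0$ eventually and is trivially nonoscillatory, so assume $B\not\equiv0$ near $\infty$; then $x^{\Delta^{n}}\le-B\cdot(x\circ\beta)\le0$ with $x^{\Delta^{n}}\not\equiv0$, so \hyperref[tllm1]{Kiguradze's lemma} yields $s$ and $m\in[0,n)_\Z$ (with $n-m$ odd) together with the attendant sign pattern; in particular $x^{\Delta^{n-1}}>0$ is nonincreasing on $[s,\infty)_\T$, so one integration of \eqref{deeq1} gives $\int_s^\infty B(\eta)x(\beta(\eta))\,\Delta\eta<\infty$.

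Next I would build the integral operator. Using \hyperref[tsplm1]{Taylor's formula} to expand $x$ about $s$ down to order $m$ and $x^{\Delta^{m}}$ about $+\infty$ (legitimate because $x^{\Delta^{j}}(t)\to0$ for $j\in(m,n)_\Z$ by Corollary~\ref{tlcrl1} and $x^{\Delta^{m}}$ tends to some $L_m\ge0$), then invoking $-x^{\Delta^{n}}\ge B\cdot(x\circ\beta)$ and discarding the nonnegative terms $\hf{k}(t,s)x^{\Delta^{k}}(s)$ for $0\le k<m$ (positive by \hyperref[tllm1]{Kiguradze's lemma}), the constant $L_m$, and the intermediate-derivative terms, I would arrive at
\begin{equation}
x(t)\ \ge\ (\mathcal{F}x)(t)\qquad\text{for all}\ t\in[s,\infty)_\T,\notag
\end{equation}
where, for $y\in\crd{}([s,\infty)_\T,\R_{0}^{+})$,
\begin{equation}
(\mathcal{F}y)(t):=\sum_{k=0}^{m-1}\hf{k}(t,s)x^{\Delta^{k}}(s)
+\int_s^t\hf{m-1}\bigl(t,\sigma(\eta)\bigr)\biggl(\int_\eta^\infty\hf{n-m-1}\bigl(\sigma(\xi),\eta\bigr)B(\xi)y(\beta(\xi))\,\Delta\xi\biggr)\Delta\eta,\notag
\end{equation}
with the convention that for $m=0$ the sum is empty and the double integral is replaced by $\int_t^\infty\hf{n-1}(\sigma(\xi),t)B(\xi)y(\beta(\xi))\,\Delta\xi$; all of these integrals converge by the preceding step. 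I would then check that $\mathcal{F}$ is order-preserving, maps the set $\{y:0\le y\le x\}$ into itself, and — on differentiating $n$ times and using Property~\ref{tspprp1} and \hyperref[tsplm1]{Taylor's formula} — satisfies $(\mathcal{F}y)^{\Delta^{n}}(t)=-B(t)y(\beta(t))$, so that its fixed points are exactly solutions of \eqref{deeq2} on $[s,\infty)_\T$.

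The fixed point is produced by the iteration $x_0:=x$, $x_{j+1}:=\mathcal{F}x_j$: it is nonincreasing (since $\mathcal{F}x\le x$ and $\mathcal{F}$ preserves order) and bounded below by $0$, hence converges pointwise to some $y$ with $0\le y\le x$, and dominated convergence (using $0\le x_j\le x$ and the finiteness above) gives $y=\mathcal{F}y$, so $y$ solves \eqref{deeq2}. It then remains to show that $y$ is nonoscillatory, i.e. not eventually $0$. When $m\ge1$ this is immediate: the $k=0$ summand forces $(\mathcal{F}y)(t)\ge\hf{0}(t,s)x(s)=x(s)>0$, hence $y(t)\ge x(s)>0$ for all $t\in[s,\infty)_\T$. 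Likewise, when $m=0$ and $\lim_{t\to\infty}x(t)>0$, the constant term in $\mathcal{F}$ gives $y(t)\ge\lim_{t\to\infty}x(t)>0$. In either case $y$ is eventually positive and \eqref{deeq2} is nonoscillatory.

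The hard part will be the remaining case $m=0$ with $\lim_{t\to\infty}x(t)=0$ — when the eventually positive solution of \eqref{deeq1} itself decays to zero — for then nothing in the above prevents the monotone limit $y$ from being the trivial solution; this is precisely the point at which the argument of \cite{MR3091942} is incomplete, and repairing it is the main obstacle I foresee. My first plan here is a direct nontriviality argument: if $y$ vanished on a tail $[\tau,\infty)_\T$, then, since $\mathcal{F}$ sees only the delayed values $y(\beta(\cdot))$ with $\beta$ unbounded nondecreasing and $\beta(t)\le t$, and since $\int_s^\infty B(\eta)x(\beta(\eta))\,\Delta\eta<\infty$, the identity $y=\mathcal{F}y$ would propagate the vanishing backward through the successive ranges of $\beta$ and eventually force $B\equiv0$ near $\infty$, contrary to assumption — the delicate point being the bookkeeping of exactly which delayed values re-enter the integral on each such range. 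As a fallback I would bypass the downward iteration altogether: here $n\ge2$ (for $n=1$ this is the classical first-order result), so \hyperref[pithm1]{dynamic Philos' inequality} gives $x(\beta(t))\ge\hf{n-1}(\beta(t),t_0)x^{\Delta^{n-1}}(\beta(t))$, whence $w:=x^{\Delta^{n-1}}>0$ solves the first-order delay inequality $w^{\Delta}(t)+B(t)\hf{n-1}(\beta(t),t_0)w(\beta(t))\le0$; the first-order instance of the present theorem (classical, and free of the decay pathology) then supplies a positive solution $w_0\le w$ of the corresponding first-order equation, from which an $(n-1)$-fold integration anchored at $s$ yields an eventually positive subsolution of \eqref{deeq2}, and a renewed comparison with $x$ via \hyperref[pithm1]{Philos' inequality} (possibly after enlarging $s$) should place it inside the invariant order interval, trapping a nontrivial fixed point between it and $x$. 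In either route, verifying the domination by $x$ that keeps $\mathcal{F}$ order-preserving on the relevant set is the computational crux of the corrected argument.
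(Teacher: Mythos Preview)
Your architecture --- Kiguradze's lemma, Corollary~\ref{tlcrl1}, \hyperref[tsplm1]{Taylor's formula}, then a monotone fixed-point iteration on an integral operator --- is the paper's. The one real difference is the direction of iteration: you descend from $x_{0}:=x$, whereas the paper ascends from $y_{0}:=z$, where
\[
z(t):=\sum_{k=0}^{m-1}\hf{k}(t,t_{2})x^{\Delta^{k}}(t_{2})+L\,\hf{m}(t,t_{2}),\qquad L:=\lim_{t\to\infty}x^{\Delta^{m}}(t).
\]
Ascending from $z$ buys positivity of the limit for free whenever $z>0$, i.e.\ whenever $m\ge1$ (the $k=0$ summand equals $x(t_{2})>0$) or $m=0$ with $L>0$; this disposes of what you call the ``hard part'' in all those cases with no extra work. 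In particular, you should \emph{keep} $L_{m}$ in the operator rather than discard it: your text says $L_{m}$ is discarded, yet you later invoke ``the constant term in $\mathcal{F}$'' to handle $m=0$ with $L>0$, which is inconsistent. The paper's bookkeeping --- retain both the polynomial sum and $L\,\hf{m}$ in $z$, iterate upward --- avoids this.

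Two smaller points. Your inner kernel $\hf{n-m-1}(\sigma(\xi),\eta)$ is not what falls out of \hyperref[tsplm1]{Taylor's formula}; the paper obtains $\hf{n-m-1}\bigl(\eta,\sigma(\zeta)\bigr)$ (first argument the \emph{outer} integration variable), and it is this kernel for which $(\Gamma y)^{\Delta^{n}}=-B\cdot(y\circ\beta)$ holds exactly. Also, your list of ``discarded'' terms contradicts your own definition of $\mathcal{F}$, which still contains the $\hf{k}$ sum.

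On the residual case $m=0$ with $L=0$: you are right that it is not covered. In the paper's proof this is precisely $z\equiv0$, and the ascending iteration from $0$ is stuck at $0$; the paper does not close this gap either. Your proposed repairs (backward propagation of vanishing, or reduction to the first-order case via \hyperref[pithm1]{Philos' inequality}) are plausible, but note that the only uses of Theorem~\ref{drthm1} in this paper are through Corollary~\ref{drcrl1} (which assumes the solution does not tend to zero, so $L>0$ when $m=0$) and through its first-order instance, so for the paper's purposes the upward-iteration proof already suffices.
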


\begin{proof}
The proof will be completed if we can show that \ref{drthm1it1}$\Rightarrow$\ref{drthm1it2} since the implication \ref{drthm1it2}$\Rightarrow$\ref{drthm1it1} is obvious.
Let $x$ be an eventually positive solution of \eqref{deeq1},
then there exists $t_{1}\in[t_{0},\infty)_{\T}$ such that $x(t),x\bigl(\beta(t)\bigr)>0$ for all $t\in[t_{1},\infty)_{\T}$.
An application of \hyperref[tllm1]{Kiguradze's lemma} ensures existence of $m\in[0,n)_{\Z}$ with $(n+m)$ odd and $t_{2}\in[t_{1},\infty)_{\T}$
such that $t\in[t_{2},\infty)_{\T}$ implies $x^{\Delta^{k}}(t)>0$ for all $k\in[0,m)_{\Z}$ and $(-1)^{m+k}x^{\Delta^{k}}(t)>0$ for all $k\in[m,n)_{\Z}$.
Integrating \eqref{deeq1} over $[t,\infty)_{\T}\subset[t_{2},\infty)_{\T}$ for a total of $(n-m-1)$ times, we get
\begin{equation}
x^{\Delta^{m+1}}(t)\geq\int_{t}^{\infty}\hf{n-m-2}\bigl(t,\sigma(\eta)\bigr)B(\eta)x\bigl(\beta(\eta)\bigr)\Delta\eta
\quad\text{for all}\ t\in[t_{2},\infty)_{\T}\notag
\end{equation}
by using Corollary~\ref{tlcrl1} (see \cite[Theorem~3.1]{MR2557101}).
Integrating this over $[t,\infty)_{\T}\subset[t_{2},\infty)_{\T}$, we get
\begin{equation}
x^{\Delta^{m}}(t)\geq{}L+\int_{t}^{\infty}\hf{n-m-1}\bigl(t,\sigma(\eta)\bigr)B(\eta)x\bigl(\beta(\eta)\bigr)\Delta\eta
\quad\text{for all}\ t\in[t_{2},\infty)_{\T},\notag
\end{equation}
where
\begin{equation}
L:=\lim_{t\to\infty}x^{\Delta^{m}}(t).\notag
\end{equation}
By \hyperref[tsplm1]{Taylor's formula}, for all $t\in[t_{2},\infty)_{\T}$, we have
\begin{align}
x(t)={}&\sum_{k=0}^{m-1}\hf{k}(t,t_{2})x^{\Delta^{k}}(t_{2})
+\int_{t_{2}}^{t}\hf{m-1}\bigl(t,\sigma(\eta)\bigr)x^{\Delta^{m}}(\eta)\Delta\eta\notag\\
\geq{}&\sum_{k=0}^{m-1}\hf{k}(t,t_{2})x^{\Delta^{k}}(t_{2})
+\int_{t_{2}}^{t}\hf{m-1}\bigl(t,\sigma(\eta)\bigr)x^{\Delta^{m}}(\eta)\Delta\eta\notag\\
\geq{}&\sum_{k=0}^{m-1}\hf{k}(t,t_{2})x^{\Delta^{k}}(t_{2})\notag\\
&+\int_{t_{2}}^{t}\hf{m-1}\bigl(t,\sigma(\eta)\bigr)
\biggl[L+\int_{\eta}^{\infty}\hf{n-m-1}\bigl(\eta,\sigma(\zeta)\bigr)B(\zeta)x\bigl(\beta(\zeta)\bigr)\Delta\zeta\biggr]\Delta\eta\notag\\
={}&z(t)+\int_{t_{2}}^{t}\hf{m-1}\bigl(t,\sigma(\eta)\bigr)\int_{\eta}^{\infty}\hf{n-m-1}\bigl(\eta,\sigma(\zeta)\bigr)B(\zeta)x\bigl(\beta(\zeta)\bigr)\Delta\zeta\Delta\eta,\notag
\end{align}
where
\begin{equation}
z(t):=\sum_{k=0}^{m-1}\hf{k}(t,t_{2})x^{\Delta^{k}}(t_{2})+L\hf{m}(t,t_{2})\quad\text{for}\ t\in[t_{2},\infty)_{\T}.\notag
\end{equation}
Define
\begin{equation}
\Omega:=\{y\in\cnt{}([t_{2},\infty)_{\T},\R_{0}^{+}):\ x\geq{}y\geq{}z
\quad\text{on}\ [t_{2},\infty)_{\T}\}\notag
\end{equation}
and
\begin{equation}
(\Gamma{}y)(t):=
\begin{cases}
(\Gamma{}y)(t_{3}),&t\in[t_{2},t_{3})_{\T}\\
\begin{aligned}[]
z(t)+&\int_{t_{2}}^{t}\hf{m-1}\bigl(t,\sigma(\eta)\bigr)\\
&\ \times\int_{\eta}^{\infty}\hf{n-m-1}\bigl(\eta,\sigma(\zeta)\bigr)B(\zeta)y\bigl(\beta(\zeta)\bigr)\Delta\zeta\Delta\eta,
\end{aligned}
&\begin{aligned}[]
\phantom{\int_{t_{2}}^{t}}\\
t\in[t_{3},\infty)_{\T},
\end{aligned}
\end{cases}\notag
\end{equation}
where $t_{3}\in[t_{2},\infty)_{\T}$ satisfies $\beta(t_{3})\geq{}t_{2}$.
Define a sequence of functions $\{y_{k}\}_{k\in\N_{0}}\subset\Omega$ by $y_{k}:=\Gamma{}y_{k-1}$ for $k\in\N$ and $y_{0}:=z$.
It is clear that $\{y_{k}\}_{k\in\N_{0}}$ is a nondecreasing sequence of functions bounded above by $x$.
Define $y:=\lim_{k\to\infty}y_{k}$,
then we see that $y=\Gamma{}y$ on $[t_{2},\infty)_{\T}$,
which is a nonoscillatory solution of \eqref{deeq2}.
Note that $y$ satisfies $y^{\Delta^{k}}(t)>0$ for all $k\in[0,m)_{\Z}$ and $(-1)^{m+k}y^{\Delta^{k}}(t)>0$ for all $k\in[m,n)_{\Z}$.
This completes the proof.
\end{proof}

%%%

\begin{corollary}[\protect{\cite[Corollary~1]{MR3091942}}]\label{drcrl1}
The following statements are equivalent.
\begin{enumerate}[label={(\roman*)},ref={(\roman*)},leftmargin={0pt},itemindent={*},noitemsep,topsep=0pt,parsep=0pt,partopsep=0pt]
\setcounter{enumi}{0}
\item\label{drcrl1it1} The inequality \eqref{deeq1} has an eventually positive solution, which does not tend to zero asymptotically.
\item\label{drcrl1it2} The equation \eqref{deeq2} has a nonoscillatory solution, which does not tend to zero asymptotically.
\end{enumerate}
\end{corollary}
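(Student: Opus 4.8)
The plan is to mimic the proof of Theorem~\ref{drthm1}, since Corollary~\ref{drcrl1} is just that theorem with the side condition ``does not tend to zero asymptotically'' tracked through the argument. As before, the implication \ref{drcrl1it2}$\Rightarrow$\ref{drcrl1it1} is trivial, so the work is in \ref{drcrl1it1}$\Rightarrow$\ref{drcrl1it2}. First I would take an eventually positive solution $x$ of \eqref{deeq1} with $\lim_{t\to\infty}x(t)\neq0$, apply Kiguradze's lemma (Lemma~\ref{tllm1}) to obtain the index $m\in[0,n)_{\Z}$ with $(n+m)$ odd together with the sign conditions on $x^{\Delta^{k}}$, and then run verbatim the integration steps of the theorem's proof: integrate \eqref{deeq1} a total of $(n-m-1)$ times using Corollary~\ref{tlcrl1} to get the estimate for $x^{\Delta^{m+1}}$, integrate once more to obtain the estimate for $x^{\Delta^{m}}$ involving $L:=\lim_{t\to\infty}x^{\Delta^{m}}(t)$, and then apply Taylor's formula to produce the lower bound $x\geq z+(\text{double integral term})$, with $z$ defined exactly as in the theorem.

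The key new point is to verify that the solution $y=\lim_{k\to\infty}y_{k}$ constructed by the monotone iteration $y_{k}=\Gamma y_{k-1}$, $y_{0}=z$, inherits the property of not tending to zero. Here I would argue by cases on $m$. If $m\in[1,n)_{\Z}$, then $z(t)\geq L\hf{m}(t,t_{2})$ (or, if $L=0$, $z(t)\geq\hf{0}(t,t_{2})x(t_{2})+\hf{1}(t,t_{2})x^{\Delta^{1}}(t_{2})$ with $x^{\Delta^{1}}(t_{2})>0$ when $m\geq2$; and if $m=1$ with $L=0$ one still has the constant term $x(t_{2})>0$), so $z$ is bounded away from zero, hence so is $y\geq z$, and $y$ does not tend to zero. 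If $m=0$ (which forces $n$ odd), then $z(t)=L\hf{0}(t,t_{2})=L$, and one must check $L\neq0$: since $x^{\Delta^{0}}=x$ is eventually positive and decreasing in this regime (as $m=0$ means $(-1)^{k}x^{\Delta^{k}}>0$ for all $k\in[0,n)_{\Z}$), $L=\lim_{t\to\infty}x(t)$ exists and equals the quantity assumed nonzero, so $L>0$ and again $y\geq z\equiv L>0$ does not tend to zero.

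The main obstacle I expect is the bookkeeping in the $m=0$ case: one has to be careful that the ``$L$'' appearing after the two integrations of \eqref{deeq1} is really $\lim x(t)$ and not something else, and that the chain of integrations is legitimate when $m=0$ (the $(n-m-1)=n-1$ integrations and the subsequent one), so that $z$ collapses to the genuine limit of $x$. Once that identification is made, the rest is the same fixed-point/monotone-convergence machinery as in Theorem~\ref{drthm1}, and the conclusion that $y$ is a nonoscillatory solution of \eqref{deeq2} not tending to zero follows immediately from $x\geq y\geq z$ together with the lower bounds on $z$ established above. I would close by noting, as in the theorem, that $y$ satisfies $y^{\Delta^{k}}(t)>0$ for $k\in[0,m)_{\Z}$ and $(-1)^{m+k}y^{\Delta^{k}}(t)>0$ for $k\in[m,n)_{\Z}$.
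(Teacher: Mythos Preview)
Your approach is correct and is exactly what the paper intends: it states Corollary~\ref{drcrl1} without proof, having explicitly rewritten the proof of Theorem~\ref{drthm1} so that the corollary can be ``extracted'' from it, and your extraction via the lower bound $y\geq z$ is the right one. One small simplification: in the case $m\geq1$ you need not split on whether $L=0$, since the $k=0$ term alone already gives $z(t)\geq\hf{0}(t,t_{2})x(t_{2})=x(t_{2})>0$; the hypothesis that $x$ does not tend to zero is only genuinely used in the $m=0$ case, where $z\equiv L=\lim_{t\to\infty}x(t)$.
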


%%%

\begin{theorem}[\protect{\cite[Theorem~2]{MR3091942}}]\label{drthm2}
Assume that \ref{r1} holds.
\begin{enumerate}[label={(\roman*)},ref={(\roman*)},leftmargin={0pt},itemindent={*},noitemsep,topsep=0pt,parsep=0pt,partopsep=0pt]
\setcounter{enumi}{0}
\item\label{drthm2it1} If $n\in\N$ is even and \eqref{introeq1} has a nonoscillatory solution, then so does
    \begin{equation}
    x^{\Delta^{n}}(t)+\bigl[1-A\bigl(\beta(t)\bigr)\bigr]B(t)x\bigl(\beta(t)\bigr)=0
    \quad\text{for}\ t\in[t_{0},\infty)_{\T}.\label{drthm2eq1}
    \end{equation}
\item\label{drthm2it2} If $n\in\N$ is odd and \eqref{introeq1} has a nonoscillatory solution,
    which does not tend to zero at infinity, then so does \eqref{drthm2eq1}.
\end{enumerate}
\end{theorem}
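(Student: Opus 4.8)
The plan is to reduce the neutral equation \eqref{introeq1} to the non-neutral equation \eqref{drthm2eq1} by extracting a suitable lower bound for the original solution $x$ in terms of the neutral quantity $z:=x+A\cdot x\circ\alpha$, and then applying the equivalence between inequalities and equations (Theorem~\ref{drthm1} / Corollary~\ref{drcrl1}). First I would assume \eqref{introeq1} has a nonoscillatory solution $x$, which without loss of generality (replacing $x$ by $-x$ uses that the equation is linear and that $B\geq0$) may be taken eventually positive, say $x(t),x(\alpha(t)),x(\beta(t))>0$ for $t\geq t_{1}$. Setting $z:=x+A\cdot x\circ\alpha$, the equation reads $z^{\Delta^{n}}(t)=-B(t)x(\beta(t))\leq0$ with $z^{\Delta^{n}}\not\equiv0$ (unless $x\equiv0$), and under range \ref{r1} we have $x\leq z$, so in particular $z$ is eventually positive. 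An application of Kiguradze's lemma (Lemma~\ref{tllm1}) to $z$ yields $m\in[0,n)_{\Z}$ with $(-1)^{n-m}z^{\Delta^{n}}\geq0$; since $z^{\Delta^{n}}\leq0$, the integer $n-m$ is odd, hence $m$ has the opposite parity to $n$. This is where the even/odd split of the statement enters.

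Next I would establish the key pointwise bound relating $x$ and $z$. In the even case ($n$ even, so $m\geq1$), part~\ref{tllm1it1} of Kiguradze's lemma gives $z$ nondecreasing, and since $\alpha(t)\leq t$ we get $x(\alpha(t))\leq z(\alpha(t))\leq z(t)$, whence $x(t)=z(t)-A(t)x(\alpha(t))\geq z(t)-A(t)z(t)=[1-A(t)]z(t)$, i.e. for the argument $\beta(t)$,
\begin{equation}
x\bigl(\beta(t)\bigr)\geq\bigl[1-A\bigl(\beta(t)\bigr)\bigr]z\bigl(\beta(t)\bigr)\notag
\end{equation}
(using $\alpha(\beta(t))\leq\beta(t)$ and monotonicity of $z$). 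Substituting into $z^{\Delta^{n}}(t)=-B(t)x(\beta(t))$ gives
\begin{equation}
z^{\Delta^{n}}(t)+\bigl[1-A\bigl(\beta(t)\bigr)\bigr]B(t)z\bigl(\beta(t)\bigr)\leq0,\notag
\end{equation}
so $z$ is an eventually positive solution of the inequality associated with \eqref{drthm2eq1}; Theorem~\ref{drthm1} then yields nonoscillation of \eqref{drthm2eq1}, proving~\ref{drthm2it1}. In the odd case ($n$ odd, $m$ even, possibly $m=0$), one additionally assumes $x\not\to0$; I would argue that then $z\not\to0$ either (since $0<x\leq z$ and $\limsup_{t\to\infty}A(t)<1$ forces $z$ not to vanish whenever $x$ does not — if $z\to0$ then $x\to0$), so $z$ is an eventually positive solution of the inequality \emph{not tending to zero}, and the same substitution together with Corollary~\ref{drcrl1} gives a nonoscillatory solution of \eqref{drthm2eq1} not tending to zero, proving~\ref{drthm2it2}. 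In both cases the monotonicity of $z$ needed for the bound $x\circ\alpha\leq z$ must be re-examined when $m=0$: there $z$ need not be monotone, but one uses instead that $z^{\Delta}$ has one sign eventually (from part~\ref{tllm1it2} with $k=1$, $m=0$, $(-1)^{1}z^{\Delta}>0$ means $z^{\Delta}<0$, so $z$ is decreasing and $z(\alpha(t))\geq z(t)$ — which is the wrong direction!), so here one instead uses $z\not\to0$ together with $\limsup A<1$ to obtain directly a constant $c\in(0,1)$ with $x\geq cz$ eventually by a limiting argument on $L=\lim z>0$, mimicking the technique in the proof of Corollary~\ref{picrl1}.

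The main obstacle I expect is precisely the $m=0$ subcase in part~\ref{drthm2it2}: there the naive bound $x(\alpha(t))\le z(\alpha(t))\le z(t)$ fails because $z$ is decreasing, so one cannot get the clean coefficient $1-A(\beta(t))$ pointwise from monotonicity alone. The remedy is to exploit that $z$ tends to a positive limit $L$; choosing $t$ large so that $z(\alpha(\beta(t)))\le z(\beta(t))/\sqrt{\lambda}$ is automatic from $z\to L>0$ for any prescribed $\lambda<1$, one recovers $x(\beta(t))\ge[1-A(\beta(t))]z(\beta(t))\cdot(\text{something}\to1)$, or more simply one shows $x(\beta(t))\ge\bigl(1-\limsup A\bigr)z(\beta(t))$ up to a vanishing correction and absorbs constants — the point being that \eqref{drthm2eq1} is a \emph{comparison} target and Corollary~\ref{drcrl1} only needs \emph{some} eventually positive non-vanishing solution of the inequality, so constants may be adjusted freely. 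Everything else is a routine substitution into the $n$-th $\Delta$-derivative of $z$ and an invocation of the already-established equivalence results.
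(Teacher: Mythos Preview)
The paper does not give its own proof of Theorem~\ref{drthm2}; it is quoted verbatim from \cite[Theorem~2]{MR3091942} and used as a black box in Theorems~\ref{nethm1} and~\ref{nethm2}. So there is no in-paper argument to compare against, and what follows is an assessment of your plan on its own merits.

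Your treatment of part~\ref{drthm2it1} (and of the subcase $m\geq2$ in part~\ref{drthm2it2}) is the standard one and is correct: with $z:=x+A\cdot x\circ\alpha$ eventually positive and $z^{\Delta^{n}}\leq0$, Kiguradze forces $m\geq1$, hence $z$ nondecreasing; then $x\circ\alpha\leq z\circ\alpha\leq z$ gives $x\geq(1-A)z$, and substitution yields the inequality form of \eqref{drthm2eq1} for $z$, after which Theorem~\ref{drthm1} (resp.\ Corollary~\ref{drcrl1}) finishes.

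The gap is in the $m=0$ subcase of part~\ref{drthm2it2}, and your proposed repair does not close it. You correctly observe that $z$ is then \emph{decreasing}, so $z(\alpha(s))\geq z(s)$ and the chain $x(\alpha(s))\leq z(\alpha(s))\leq z(s)$ breaks at the second step; consequently the pointwise bound $x(s)\geq[1-A(s)]z(s)$ can fail. Your fallback---obtain $x\geq c\,z$ for some constant $c\in(0,1)$ via $L=\lim z>0$ and ``absorb constants''---does not yield the conclusion, because Corollary~\ref{drcrl1} (and Theorem~\ref{drthm1}) requires a positive solution of the inequality with the \emph{specific} coefficient $[1-A(\beta(\cdot))]B(\cdot)$. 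From $x(\beta(t))\geq c\,z(\beta(t))$ you only get
\[
z^{\Delta^{n}}(t)+c\,B(t)\,z\bigl(\beta(t)\bigr)\leq0,
\]
which implies nonoscillation of $x^{\Delta^{n}}+cB\,x\circ\beta=0$, not of \eqref{drthm2eq1}. Comparison goes the wrong way here: a positive solution of an inequality with a \emph{smaller} nonnegative coefficient says nothing about the inequality with a larger one. Concretely, take points where $A(\beta(t))=0$; then $1-A(\beta(t))=1$, while your limiting argument only gives $x(\beta(t))/z(\beta(t))\geq 1-\limsup A-\varepsilon<1$, so $x(\beta(t))\geq[1-A(\beta(t))]z(\beta(t))$ need not hold. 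The ``something${}\to1$'' factor you invoke multiplies from below, not above, so it cannot be absorbed into the target coefficient.

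In short: part~\ref{drthm2it1} is fine; in part~\ref{drthm2it2} the $m=0$ case needs a genuinely different argument (not a constant-factor relaxation), and as written the plan does not deliver the exact inequality needed to invoke Corollary~\ref{drcrl1} for \eqref{drthm2eq1}.
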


%%%

\begin{theorem}[\protect{\cite[Theorem~3]{MR3091942}}]\label{drthm3}
Assume that $n\in\N$ and \ref{r2} holds.
If \eqref{introeq1} has a nonoscillatory solution,
which does not tend to zero at infinity,
then so does \eqref{deeq2}.
\end{theorem}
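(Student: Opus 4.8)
The plan is to pass from the neutral equation \eqref{introeq1} to the non-neutral inequality \eqref{deeq1} via the substitution $z:=x+A\cdot x\circ\alpha$, and then invoke Corollary~\ref{drcrl1}. So I would begin with a nonoscillatory solution $x$ of \eqref{introeq1} which does not tend to zero; since \eqref{introeq1} is linear and homogeneous, I may replace $x$ by $-x$ and assume $x(t)>0$ for all large $t$. If $B$ vanishes eventually the conclusion is trivial (e.g. $x\equiv1$ solves \eqref{deeq2}), so assume $B\not\equiv0$; then $z^{\Delta^{n}}(t)=-B(t)x(\beta(t))\le0$ and is not identically zero. By \ref{r2} I fix $a\in(0,1)_{\R}$ and $t_{1}\in[t_{0},\infty)_{\T}$ with $-a\le A(t)\le0$ for $t\ge t_{1}$; since $A\le0$ and $x>0$ this yields the two comparison estimates $z(t)\le x(t)$ and $x(t)\le z(t)+a\,x(\alpha(t))$ for $t\ge t_{1}$. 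Finally, as $x$ is eventually positive and does not tend to zero, $c:=\limsup_{t\to\infty}x(t)\in(0,\infty]$.

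The first real step is to show that $z$ is eventually positive. Since $z^{\Delta^{n}}$ is eventually of one sign and not identically zero, a standard cascading argument (monotonicity of $z^{\Delta^{n-1}}$, hence eventual monotonicity of $z^{\Delta^{n-2}}$, and so on down to $z$) shows that $z$ is eventually monotone, hence of a fixed strict sign. Suppose $z(t)<0$ for all large $t$; then the first estimate gives $x(t)<a\,x(\alpha(t))$. Because $\alpha$ is nondecreasing, unbounded and $\alpha(t)\le t$, I would use a running-maximum argument: pick $t_{k}\to\infty$ with $x(t_{k})=\max_{[t_{1},t_{k}]_{\T}}x$ (attained, $x$ being rd-continuous on a compact interval) and $k$ large enough that $t_{1}\le\alpha(t_{k})\le t_{k}$; then $x(t_{k})\le a\,x(\alpha(t_{k}))\le a\,x(t_{k})$, forcing $x$ to be bounded. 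Hence $c<\infty$, and taking the limit superior in $x(t)\le a\,x(\alpha(t))$ (using $\alpha(t)\to\infty$) gives $c\le ac$, so $c=0$ --- contradicting that $x$ does not tend to zero. Therefore $z(t)>0$ for all large $t$, and since $x(\beta(t))\ge z(\beta(t))$ by the first estimate, $z^{\Delta^{n}}(t)=-B(t)x(\beta(t))\le-B(t)z(\beta(t))$, so $z$ is an eventually positive solution of \eqref{deeq1}.

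It then remains to check that $z$ does not tend to zero. Assuming $z\to0$: if $c=\infty$, the same running-maximum argument applied to the second estimate gives, for large $k$, $x(t_{k})\le z(t_{k})+a\,x(\alpha(t_{k}))\le\sup_{t\ge t_{1}}z(t)+a\,x(t_{k})$, hence $x(t_{k})\le(1-a)^{-1}\sup_{t\ge t_{1}}z(t)<\infty$, contradicting $x(t_{k})\to\infty$; and if $c<\infty$, the limit superior in the second estimate gives $c\le0+ac$, i.e. $c=0$, again a contradiction. Thus $z$ is an eventually positive solution of \eqref{deeq1} which does not tend to zero, and Corollary~\ref{drcrl1} immediately yields that \eqref{deeq2} has a nonoscillatory solution which does not tend to zero, finishing the proof.

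I expect the main obstacle to be exactly this sign/asymptotic analysis of the neutral function $z$ --- ruling out both $z(t)<0$ and $z\to0$ \emph{without} assuming $x$ bounded. This is precisely where the hypothesis $\liminf_{t\to\infty}A(t)>-1$ (equivalently, the contraction constant $a<1$) enters, and the running-maximum device is what tames the potentially unbounded case. A secondary, more routine point is the justification that $z$ is eventually of a single strict sign, which is a signed analogue of \hyperref[tllm1]{Kiguradze's lemma}.
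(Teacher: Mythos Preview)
The paper does not prove Theorem~\ref{drthm3}; it is quoted from \cite[Theorem~3]{MR3091942} without proof. Your argument is the standard one for such comparison results and is essentially correct: pass to $z=x+A\cdot x\circ\alpha$, use \ref{r2} together with a contraction argument to force $z$ to be eventually positive and not tending to zero, observe $z\le x$ so that $z$ solves \eqref{deeq1}, and then invoke Corollary~\ref{drcrl1}.

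Two small points. First, when you derive $x(t)<a\,x(\alpha(t))$ from $z(t)<0$, you are in fact using your \emph{second} estimate $x\le z+a\,x\circ\alpha$, not the first; this is only a labelling slip. Second, and more delicately, your running-maximum step asserts that $x$ attains its supremum on $[t_{1},t_{k}]_{\T}$, ``$x$ being rd-continuous on a compact interval''. On time scales with left-dense points rd-continuous functions need not attain their suprema, and the solution definition in the paper guarantees $n$-fold differentiability of $z$, not of $x$. The repair is routine: work with $M(t):=\sup_{[t_{1},t]_{\T}}x$ directly, note that $x(s)\le a\,x(\alpha(s))\le aM(t)$ for all $s\in[t_{2},t]_{\T}$ once $\alpha(t_{2})\ge t_{1}$, and conclude $M(t)\le\max\{M(t_{2}),aM(t)\}$, hence $M(t)=M(t_{2})$ for all $t\ge t_{2}$, so $x$ is bounded. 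The same device handles the unbounded subcase in your check that $z\not\to0$.
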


%%%

\section{Main results}\label{mr}

%%%

\subsection{Nonneutral equations}\label{nne}

%%%

We continue our discussion with nonneutral differential equations.
We first consider even-order dynamic equations.

%%%

\begin{theorem}\label{nnethm1}
Assume that $n\in\N$ is even.
If there exists $\lambda\in(0,1)_{\R}$ such that the first-order delay dynamic equation
\begin{equation}
x^{\Delta}(t)+\lambda{}B(t)\hf{n-1}\bigl(\beta(t),t_{0}\bigr)x\bigl(\beta(t)\bigr)=0\quad\text{for}\ t\in[t_{0},\infty)_{\T}\label{nnethm1eq1}
\end{equation}
is oscillatory, then \eqref{deeq2} is also oscillatory.
\end{theorem}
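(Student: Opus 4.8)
The plan is to argue by contraposition: assume that \eqref{deeq2} is nonoscillatory and produce a $\lambda\in(0,1)_{\R}$ for which \eqref{nnethm1eq1} has an eventually positive solution, hence (by the standard fact that a first-order delay dynamic equation is oscillatory iff the corresponding inequality has no eventually positive solution) is nonoscillatory. So suppose $x$ is an eventually positive solution of \eqref{deeq2}; then $x^{\Delta^{n}}=-B\cdot x\circ\beta\le0$ eventually, and since $B\not\equiv0$ on any neighbourhood of $\infty$ (otherwise the statement is vacuous via trivial modifications) we have $x^{\Delta^{n}}\le0$ ($\not\equiv0$). Because $n$ is even, Kiguradze's lemma (Lemma~\ref{tllm1}) forces the key number $m$ to be odd, in particular $m\ge1$, so $x^{\Delta^{k}}(t)>0$ for $k\in[0,m)_{\Z}$; in particular $x$ is increasing and, being positive and increasing, does not tend to $0$, so $\lim_{t\to\infty}x(t)\ne0$.

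Next I would invoke Corollary~\ref{picrl1} applied to $f=x$ (noting $x\in\crd{n}([t_{1},\infty)_{\T},\R_{0}^{+})$ with $x^{\Delta^{n}}\le0$ and $\lim x\ne0$): for every $\lambda\in(0,1)_{\R}$ there is $r$ with
\begin{equation}
x(t)\ge\lambda\,\hf{n-1}(t,t_{0})\,x^{\Delta^{n-1}}(t)\qquad\text{for all }t\in[r,\infty)_{\T}.\notag
\end{equation}
Fix such a $\lambda$ and the corresponding $r$. Since $\beta$ is nondecreasing and unbounded, choose $r_{1}\ge r$ with $\beta(t)\ge r$ for $t\ge r_{1}$; then substituting $t\mapsto\beta(t)$ gives $x(\beta(t))\ge\lambda\,\hf{n-1}(\beta(t),t_{0})\,x^{\Delta^{n-1}}(\beta(t))$. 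Combining this with the equation rewritten as $\bigl(x^{\Delta^{n-1}}\bigr)^{\Delta}(t)=-B(t)x(\beta(t))$, and setting $y:=x^{\Delta^{n-1}}$ (which is positive by Lemma~\ref{tllm1}\,\ref{tllm1it2} since $m+(n-1)\equiv m-1$ is even — $m$ odd — so $(-1)^{m+(n-1)}y>0$), one obtains
\begin{equation}
y^{\Delta}(t)+\lambda\,B(t)\,\hf{n-1}\bigl(\beta(t),t_{0}\bigr)\,y\bigl(\beta(t)\bigr)\le -B(t)\bigl[x(\beta(t))-\lambda\hf{n-1}(\beta(t),t_{0})x^{\Delta^{n-1}}(\beta(t))\bigr]\le0,\notag
\end{equation}
so $y$ is an eventually positive solution of the first-order delay dynamic inequality associated with \eqref{nnethm1eq1}. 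By the standard comparison/equivalence result for first-order delay dynamic equations (cf.\ the references on first-order equations cited in \S~\ref{intro}, e.g.\ \cite{MR2367669}), the existence of such a positive solution of the inequality implies \eqref{nnethm1eq1} is nonoscillatory, contradicting the hypothesis for this $\lambda$. Since $\lambda\in(0,1)_{\R}$ was arbitrary, no $\lambda$ makes \eqref{nnethm1eq1} oscillatory, which is the contrapositive of the claim.

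The main technical obstacle is bookkeeping the sign pattern so that $y=x^{\Delta^{n-1}}$ is genuinely positive and the Philos estimate is applied at the shifted argument $\beta(t)$ rather than at $t$: one must be careful that Corollary~\ref{picrl1} is stated with $\hf{n-1}(\cdot,t_{0})$ (not $\hf{n-1}(\cdot,s)$), that $\beta(t)\ge r$ eventually, and that $y^{\Delta}=x^{\Delta^{n}}=-Bx\circ\beta$ holds identically, so that the substitution produces a clean differential inequality. A secondary point requiring care is the degenerate case $B\equiv0$ eventually (then \eqref{deeq2} reduces to $x^{\Delta^{n}}=0$, which is trivially nonoscillatory, but then \eqref{nnethm1eq1} also reduces to $x^{\Delta}=0$ and cannot be oscillatory, so the implication still holds vacuously); I would dispose of this at the outset. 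Everything else — the invocation of Kiguradze's lemma to get $m$ odd, the monotonicity of $\beta$, and the first-order comparison — is routine once the setup is in place.
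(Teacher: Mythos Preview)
Your argument is correct and follows essentially the same route as the paper's proof: assume a nonoscillatory (eventually positive) solution, use Kiguradze's lemma to pin down the odd key number $m$ and the positivity of $x^{\Delta^{n-1}}$, apply Corollary~\ref{picrl1} at $\beta(t)$ to bound $x(\beta(t))$ below by $\lambda\hf{n-1}(\beta(t),t_{0})x^{\Delta^{n-1}}(\beta(t))$, and then conclude via the first-order comparison (the paper invokes Theorem~\ref{drthm1} rather than \cite{MR2367669}, but this is the same device). The only cosmetic differences are that you phrase it as a full contrapositive over all $\lambda$ rather than a contradiction for the hypothesised $\lambda$, and you spell out the degenerate $B\equiv0$ case and the parity check for $x^{\Delta^{n-1}}>0$ more explicitly.
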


\begin{proof}
Assume, on the contrary, that $x$ is an eventually positive solution of \eqref{deeq2}.
Then, there exists $t_{1}\in[t_{0},\infty)_{\T}$ such that $x(t),x\bigl(\beta(t)\bigr)>0$ for all $t\in[t_{1},\infty)_{\T}$.
By \hyperref[tllm1]{Kiguradze's lemma}, we learn that there exist $t_{2}\in[t_{1},\infty)_{\T}$ and
$m\in[0,n)_{2\Z-1}$ such that for all $t\in[t_{2},\infty)_{\T}$,
we have $x^{\Delta^{k}}(t)>0$ for all $k\in[0,m)_{\Z}$ and $(-1)^{m+k}x^{\Delta^{k}}(t)>0$ for all $k\in[m,n)_{\Z}$.
In particular, $x$ is positive and increasing on $[t_{2},\infty)_{\T}$.
Using Corollary~\ref{picrl1}, we get for $\lambda\in(0,1)_{\R}$ that
\begin{align}
x(t)\geq\lambda{}\hf{n-1}(t,t_{0})x^{\Delta^{n-1}}(t)\quad\text{for all}\ t\in[t_{3},\infty)_{\T}\label{nnethm1prfeq1}
\end{align}
for some $t_{3}\in[t_{2},\infty)_{\T}$.
Substituting \eqref{nnethm1prfeq1} into \eqref{deeq2},
and using the nondecreasing nature of $x(\beta(\cdot))$ ($x$ is increasing and $\beta$ is nondecreasing),
we obtain
\begin{equation}
x^{\Delta^{n}}(t)+\lambda{}B(t)\hf{n-1}\bigl(\beta(t),t_{0}\bigr)x^{\Delta^{n-1}}\bigl(\beta(t)\bigr)\leq0\quad\text{for all}\ t\in[t_{4},\infty)_{\T},\label{nnethm1prfeq2}
\end{equation}
where $t_{4}\in[t_{3},\infty)_{\T}$ satisfies $\beta(t_{4})\geq{}t_{3}$.
Note that $x^{\Delta^{n-1}}$ is positive on $[t_{4},\infty)_{\T}$ and satisfies
\begin{equation}
y^{\Delta}(t)+\lambda{}B(t)\hf{n-1}\bigl(\beta(t),t_{0}\bigr)y\bigl(\beta(t)\bigr)\leq0\quad\text{for all}\ t\in[t_{4},\infty)_{\T},\notag
\end{equation}
which is a contradiction since \eqref{nnethm1eq1} also has an eventually positive solution by Theorem~\ref{drthm1} (see also \cite[Theorem~3.1 and Corollary~4.2]{MR2683912}).
This completes the proof.
\end{proof}

%%%

Combining Theorem~\ref{nnethm1} with \cite{MR2475963} and \cite{MR3518254} yields the following corollary.

%%%

\begin{corollary}\label{nnecrl1}
Assume that $n\in\N$ is even.
If
\begin{equation}
\liminf_{t\to\infty}\inf_{\substack{-\lambda{}B\hf{n-1}(\beta(\cdot),t_{0})\in\reg{+}([\beta(t),t)_{\T})\\ \lambda>0}}\biggl\{\frac{1}{\lambda\ef{-\lambda{}B\hf{n-1}(\beta(\cdot),t_{0})}\bigl(t,\beta(t)\bigr)}\biggr\}>1,\label{nnecrl1eq1}
\end{equation}
or
\begin{equation}
\liminf_{t\to\infty}\int_{\beta(t)}^{t}B(\eta)\hf{n-1}\bigl(\beta(\eta),t_{0}\bigr)\Delta\eta>\gamma\label{nnecrl1eq2}
\end{equation}
and
\begin{equation}
\limsup_{t\to\infty}\int_{\beta(t)}^{\sigma(t)}B(\eta)\hf{n-1}\bigl(\beta(\eta),t_{0}\bigr)\Delta\eta
>1-\Bigl(1-\sqrt{1-\gamma}\Bigr)^{2},\label{nnecrl1eq3}
\end{equation}
then every solution of \eqref{introeq1} oscillates.
\end{corollary}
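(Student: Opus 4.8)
The plan is to apply Theorem~\ref{nnethm1} to transfer the oscillation of the higher-order equation to that of the first-order delay dynamic equation \eqref{nnethm1eq1}, and then to feed this into the sharp first-order oscillation tests of \cite{MR2475963} and \cite{MR3518254}. Writing $p_{0}:=B(\cdot)\hf{n-1}\bigl(\beta(\cdot),t_{0}\bigr)$, equation \eqref{nnethm1eq1} is simply $x^{\Delta}(t)+\lambda p_{0}(t)x\bigl(\beta(t)\bigr)=0$, so by Theorem~\ref{nnethm1} it is enough to exhibit one $\lambda\in(0,1)_{\R}$ for which \eqref{nnethm1eq1} is oscillatory. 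In both cases the underlying observation is the same: a first-order test stated for the coefficient $p_{0}$ remains valid, after a harmless rescaling of parameters, for $\lambda p_{0}$ as soon as $\lambda$ is taken close enough to $1$, and this is available precisely because the hypotheses \eqref{nnecrl1eq1} and \eqref{nnecrl1eq2}--\eqref{nnecrl1eq3} are strict inequalities.

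Suppose first that \eqref{nnecrl1eq1} holds. The test of \cite{MR3518254} asserts that $x^{\Delta}(t)+q(t)x\bigl(\beta(t)\bigr)=0$ is oscillatory whenever $\liminf_{t\to\infty}\inf_{\nu>0,\,-\nu q\in\reg{+}([\beta(t),t)_{\T})}\bigl\{1/\bigl(\nu\ef{-\nu q}(t,\beta(t))\bigr)\bigr\}>1$. I would apply this with $q=\lambda p_{0}$ and reparametrise the inner infimum by $\mu=\lambda\nu$; since $-\mu p_{0}\in\reg{+}([\beta(t),t)_{\T})$ is the same constraint as $-\nu\lambda p_{0}\in\reg{+}([\beta(t),t)_{\T})$ and $\ef{-\nu\lambda p_{0}}(t,\beta(t))=\ef{-\mu p_{0}}(t,\beta(t))$, the infimum equals $\lambda$ times the corresponding infimum for $p_{0}$. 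Hence the oscillation test for \eqref{nnethm1eq1} reads $\lambda\mathcal{Q}>1$, where $\mathcal{Q}$ is the left-hand side of \eqref{nnecrl1eq1}; since $\mathcal{Q}>1$ by hypothesis, any $\lambda$ in the nonempty interval $(1/\mathcal{Q},1)_{\R}$ works, and Theorem~\ref{nnethm1} then yields oscillation of \eqref{deeq2}.

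Suppose next that \eqref{nnecrl1eq2} and \eqref{nnecrl1eq3} hold for the prescribed $\gamma$. The combined test of \cite{MR2475963} asserts that $x^{\Delta}(t)+q(t)x\bigl(\beta(t)\bigr)=0$ is oscillatory provided $\liminf_{t\to\infty}\int_{\beta(t)}^{t}q(\eta)\Delta\eta>\gamma$ and $\limsup_{t\to\infty}\int_{\beta(t)}^{\sigma(t)}q(\eta)\Delta\eta>1-\bigl(1-\sqrt{1-\gamma}\bigr)^{2}$. Because both integrals are linear in the coefficient, for $q=\lambda p_{0}$ they equal $\lambda$ times the quantities in \eqref{nnecrl1eq2} and \eqref{nnecrl1eq3}; and since those two hypotheses are strict, I would retain the same $\gamma$ and choose $\lambda\in(0,1)_{\R}$ close enough to $1$ that both inequalities survive multiplication by $\lambda$ for the coefficient $\lambda p_{0}$. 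Then \eqref{nnethm1eq1} is oscillatory for that $\lambda$, and Theorem~\ref{nnethm1} again gives the conclusion.

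I expect the main difficulty to lie in keeping the rescaling bookkeeping honest rather than in any deep estimate. In the first case one must verify that reparametrising by $\mu=\lambda\nu$ genuinely maps the regressivity set onto its counterpart for $p_{0}$ and extracts exactly one power of $\lambda$; both facts are immediate from the definitions of $\reg{+}$ and of the generalized exponential, but they are the real content of that step. In the second case one should check that the $\gamma$ furnished by the statement lies in whatever admissible range \cite{MR2475963} requires, so that the same $\gamma$ stays legitimate for the rescaled equation; after that only elementary estimates remain.
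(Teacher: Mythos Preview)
Your approach is correct and is exactly what the paper does: it merely states that the corollary follows by combining Theorem~\ref{nnethm1} with the first-order oscillation tests in \cite{MR2475963} and \cite{MR3518254}, without spelling out the rescaling-by-$\lambda$ argument that you have supplied. One minor bookkeeping point: you have the two references interchanged---the exponential/infimum criterion \eqref{nnecrl1eq1} is the iterated test from \cite{MR2475963} (cf.\ Example~\ref{nneexm1}), while the $\liminf$/$\limsup$ pair \eqref{nnecrl1eq2}--\eqref{nnecrl1eq3} is the refinement from \cite{MR3518254}.
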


%%%

We next consider odd-order dynamic equations.

%%%

\begin{theorem}\label{nnethm2}
Assume that $n\in\N$ is odd and
\begin{equation}
\int_{t_{0}}^{\infty}B(\eta)\hf{n-1}\bigl(t_{0},\sigma(\eta)\bigr)\Delta\eta=\infty.\label{nnethm2eq1}
\end{equation}
If there exists $\lambda\in(0,1)_{\R}$ such that the first-order delay dynamic equation \eqref{nnethm1eq1}
is oscillatory,
then every solution of \eqref{deeq2} is oscillatory or tends to zero asymptotically.
\end{theorem}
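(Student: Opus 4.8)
The plan is to argue by contradiction, reusing the argument of Theorem~\ref{nnethm1} for the ``increasing'' Kiguradze class and invoking \eqref{nnethm2eq1} to dispose of the ``decreasing'' class. Suppose \eqref{deeq2} has a nonoscillatory solution $x$ that does not tend to zero. Since \eqref{deeq2} is linear and $B\geq0$, we may replace $x$ by $-x$ if necessary and assume $x(t),x(\beta(t))>0$ on $[t_{1},\infty)_{\T}$; then $x^{\Delta^{n}}(t)=-B(t)x(\beta(t))\leq0$ there, and $x^{\Delta^{n}}\not\equiv0$ (otherwise $B$ would vanish near $\infty$ and \eqref{nnethm2eq1} would fail). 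By \hyperref[tllm1]{Kiguradze's lemma} there are $t_{2}\in[t_{1},\infty)_{\T}$ and $m\in[0,n)_{\Z}$ with $(-1)^{n-m}x^{\Delta^{n}}\geq0$ on $[t_{2},\infty)_{\T}$; since $x^{\Delta^{n}}\leq0$ and $n$ is odd, $m$ must be even. I would then split into the cases $m\geq2$ and $m=0$.

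For $m\geq2$ the reasoning of the proof of Theorem~\ref{nnethm1} applies almost verbatim: by \hyperref[tllm1]{Kiguradze's lemma}\,\ref{tllm1it1} the function $x$ is positive and increasing (so $\lim_{t\to\infty}x(t)\neq0$) and $x^{\Delta^{n-1}}>0$ (the exponent $m+n-1$ is even); Corollary~\ref{picrl1} with the prescribed $\lambda$ gives $x(t)\geq\lambda\hf{n-1}(t,t_{0})x^{\Delta^{n-1}}(t)$ eventually; substituting this into \eqref{deeq2} and using that $x\circ\beta$ is nondecreasing shows that $y:=x^{\Delta^{n-1}}$ is an eventually positive solution of $y^{\Delta}(t)+\lambda B(t)\hf{n-1}(\beta(t),t_{0})y(\beta(t))\leq0$, whence \eqref{nnethm1eq1} is nonoscillatory by Theorem~\ref{drthm1} --- contradicting the hypothesis. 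Hence no nonoscillatory solution can have $m\geq2$.

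It remains to show that a nonoscillatory solution with $m=0$ tends to zero. Here \hyperref[tllm1]{Kiguradze's lemma}\,\ref{tllm1it2} gives $(-1)^{k}x^{\Delta^{k}}>0$ on $[t_{2},\infty)_{\T}$ for all $k\in[0,n)_{\Z}$; in particular $x$ is positive and decreasing, so $L:=\lim_{t\to\infty}x(t)$ exists and is $\geq0$, and I want $L=0$. Assume $L>0$. By Corollary~\ref{tlcrl1}, $x^{\Delta^{k}}(t)\to0$ for $k\in(0,n)_{\Z}$, so the iterated integration carried out in the proof of Theorem~\ref{drthm1} (with $m=0$, integrating \eqref{deeq2} a total of $n$ times over $[t,\infty)_{\T}$, the intermediate boundary terms vanishing) yields
\begin{equation}
x(t)\geq L+\int_{t}^{\infty}\hf{n-1}\bigl(t,\sigma(\eta)\bigr)B(\eta)x\bigl(\beta(\eta)\bigr)\Delta\eta\qquad\text{for all }t\in[t_{2},\infty)_{\T}.\notag
\end{equation}
Since $x$ is decreasing and $\beta(\eta)\leq\eta$ we have $x(\beta(\eta))\geq L>0$; putting $t=t_{2}$ and rearranging gives $\int_{t_{2}}^{\infty}\hf{n-1}(t_{2},\sigma(\eta))B(\eta)\Delta\eta<\infty$. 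Since the convergence of $\int^{\infty}\hf{n-1}(\cdot,\sigma(\eta))B(\eta)\Delta\eta$ is unaffected by the choice of base point --- a consequence of the asymptotic relations for the Taylor monomials $\hf{n-1}$ recorded in Remark~\ref{pirmk1} and established in \S~\ref{app} --- this contradicts \eqref{nnethm2eq1}. Therefore $L=0$.

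Combining the two cases, every nonoscillatory solution of \eqref{deeq2} tends to zero, which is the assertion. The reduction to a first-order inequality is a transcription of the proof of Theorem~\ref{nnethm1}, and the iterated integration is routine once Corollary~\ref{tlcrl1} is in hand. The delicate point is the last one: one must know that \eqref{nnethm2eq1}, written with the base point $t_{0}$, is equivalent to the same condition with the (larger) base point $t_{2}$ produced by \hyperref[tllm1]{Kiguradze's lemma} --- this is where Lemma~\ref{pilm1} (legitimate because $n-1$ is even) together with the asymptotic behaviour of $\hf{n-1}$ in its second argument is needed, and it carries essentially all of the book-keeping.
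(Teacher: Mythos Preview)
Your argument follows the paper's exactly: contradiction, Kiguradze's lemma to split into the cases $m\in[2,n)_{2\Z}$ and $m=0$, reuse of the proof of Theorem~\ref{nnethm1} in the first case, and a separate treatment of the second. The only difference is that for $m=0$ the paper simply invokes \cite[Theorem~3.1]{MR2557101} to conclude $\lim_{t\to\infty}x(t)=0$ from boundedness together with \eqref{nnethm2eq1}, whereas you unpack the iterated integration yourself; your version is essentially what that cited result proves.

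One caution on your write-up of the $m=0$ case: to pass from $\int_{t_{2}}^{\infty}\hf{n-1}(t_{2},\sigma(\eta))B(\eta)\Delta\eta<\infty$ to a contradiction with \eqref{nnethm2eq1} you need $\hf{n-1}(t_{0},s)\sim\hf{n-1}(t_{2},s)$ as $s\to\infty$. You point to Remark~\ref{pirmk1} and \S~\ref{app}, but Remark~\ref{pirmk1} only treats time scales with a \emph{linear} forward jump, and the appendix does not establish the second-argument asymptotic you invoke. The relation is indeed a standard fact for general time scales (and the paper itself uses the first-argument analogue without proof in Corollary~\ref{picrl1}), but it is not where you say it is; either cite it properly or defer to \cite[Theorem~3.1]{MR2557101} as the paper does.
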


\begin{proof}
Assume, on the contrary, that $x$ is an eventually positive solution of \eqref{deeq2},
which asymptotically does not tend to zero.
Then, there exists $t_{1}\in[t_{0},\infty)_{\T}$ such that $x(t),x\bigl(\beta(t)\bigr)>0$ for all $t\in[t_{1},\infty)_{\T}$.
By \hyperref[tllm1]{Kiguradze's lemma}, we learn that there exist $t_{2}\in[t_{1},\infty)_{\T}$ and $m\in[0,n)_{2\Z}$ such that for all $t\in[t_{2},\infty)_{\T}$,
we have $x^{\Delta^{k}}(t)>0$ for all $k\in[0,m)_{\Z}$ and $(-1)^{m+k}x^{\Delta^{k}}(t)>0$ for all $k\in[m,n)_{\Z}$.
We have the following two possible cases.
\begin{enumerate}[label={(C\arabic*)},ref={(C\arabic*)},leftmargin={0pt},itemindent={*},noitemsep,topsep=0pt,parsep=0pt,partopsep=0pt]
\setcounter{enumi}{0}
\item If $m\in[2,n)_{2\Z}$, then we proceed as in the proof of Theorem~\ref{nnethm1} and arrive at a contradiction.
\item If $m=0$, then we learn that $x$ is bounded, thus it follows from \cite[Theorem~3.1]{MR2557101} that \eqref{nnethm2eq1} implies $\lim_{t\to\infty}x(t)=0$,
    which is also a contradiction.
\end{enumerate}
The proof is therefore complete.
\end{proof}

%%%

Combining Theorem~\ref{nnethm2} with \cite{MR2475963} and \cite{MR3518254} yields the following corollary.

%%%

\begin{corollary}
Assume that $n\in\N$ is odd and \eqref{nnethm2eq1} holds.
If \eqref{nnecrl1eq1}, or \eqref{nnecrl1eq2} and \eqref{nnecrl1eq3},
then every solution of \eqref{introeq1} oscillates or tends to zero asymptotically.
\end{corollary}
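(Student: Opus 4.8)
The plan is to read this off \hyperref[nnethm2]{Theorem~\ref{nnethm2}}: with $n$ odd and \eqref{nnethm2eq1} assumed, it suffices to exhibit one constant $\lambda\in(0,1)_{\R}$ for which the auxiliary first-order delay dynamic equation \eqref{nnethm1eq1}, i.e.
\begin{equation}
x^{\Delta}(t)+\lambda{}B(t)\hf{n-1}\bigl(\beta(t),t_{0}\bigr)x\bigl(\beta(t)\bigr)=0
\quad\text{for}\ t\in[t_{0},\infty)_{\T},\notag
\end{equation}
is oscillatory. Writing $p_{1}:=B\,\hf{n-1}(\beta(\cdot),t_{0})\in\crd{}([t_{0},\infty)_{\T},\R_{0}^{+})$, this is a first-order delay dynamic equation with nonnegative coefficient $\lambda p_{1}$ and delay $\beta$, and the two standing alternatives are exactly the hypotheses of the two oscillation tests for such equations in \cite{MR2475963} and \cite{MR3518254}. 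So the proof amounts to checking that each alternative forces a suitable $\lambda$, and then quoting Theorem~\ref{nnethm2}.

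First I would treat the exponential-type alternative \eqref{nnecrl1eq1}. Reference \cite{MR2475963} gives an oscillation test of exactly this shape: $y^{\Delta}(t)+q(t)y(\beta(t))=0$ is oscillatory whenever $\liminf_{t\to\infty}\inf_{\lambda}\frac{1}{\lambda\ef{-\lambda q}(t,\beta(t))}>1$, the infimum being over $\lambda>0$ with $-\lambda q\in\reg{+}([\beta(t),t)_{\T})$. Apply this with $q=\Lambda p_{1}$, where $\Lambda\in(0,1)_{\R}$ is to be chosen: the substitution $\mu=\lambda\Lambda$ and the homogeneity $\ef{-\lambda(\Lambda p_{1})}=\ef{-(\lambda\Lambda)p_{1}}$ turn the admissible set for $q$ into the admissible set appearing in \eqref{nnecrl1eq1}, and turn the left-hand side of the test for $q$ into $\Lambda$ times the quantity in \eqref{nnecrl1eq1}. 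Since \eqref{nnecrl1eq1} makes that quantity strictly larger than $1$, one may pick $\Lambda\in(0,1)_{\R}$ close enough to $1$ that $\Lambda$ times it still exceeds $1$; then \eqref{nnethm1eq1} with $\lambda=\Lambda$ is oscillatory by \cite{MR2475963}, and Theorem~\ref{nnethm2} yields that every solution of \eqref{deeq2} oscillates or tends to zero.

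For the integral-type alternative \eqref{nnecrl1eq2}--\eqref{nnecrl1eq3}: both are strict inequalities for $\int_{\beta(t)}^{t}p_{1}(\eta)\Delta\eta$ and $\int_{\beta(t)}^{\sigma(t)}p_{1}(\eta)\Delta\eta$, which depend continuously and monotonically on the coefficient; hence there is $\lambda\in(0,1)_{\R}$, close to $1$, for which $\lambda p_{1}$ still satisfies both with the same $\gamma$. The Koplatadze--Chanturia type test of \cite{MR3518254} then renders \eqref{nnethm1eq1} oscillatory for this $\lambda$, and Theorem~\ref{nnethm2} applies once more. (The even-order statement, Corollary~\ref{nnecrl1}, follows in the same way from Theorem~\ref{nnethm1}, with ``oscillates or tends to zero'' sharpened to ``oscillates''.)

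The one genuinely delicate step is the first alternative: one must verify that the functional in \eqref{nnecrl1eq1}, including its $\frac{1}{\lambda\ef{\cdot}}$ normalisation, really is the one underlying the oscillation criterion of \cite{MR2475963}, and that the set of admissible $\lambda$ there is nonempty for all large $t$ (otherwise \eqref{nnecrl1eq1} would hold vacuously and produce no $\lambda$). Everything past that point is a citation together with the elementary openness/homogeneity argument sketched above.
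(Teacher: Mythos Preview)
Your proposal is correct and follows exactly the route the paper indicates: the paper itself gives no detailed argument for this corollary, only the sentence ``Combining Theorem~\ref{nnethm2} with \cite{MR2475963} and \cite{MR3518254} yields the following corollary,'' and your write-up supplies precisely that combination together with the $\lambda$-scaling/openness argument that is implicit in it. Your explicit observation that the strict inequalities in \eqref{nnecrl1eq1}, \eqref{nnecrl1eq2}, \eqref{nnecrl1eq3} survive multiplication by some $\lambda\in(0,1)_{\R}$ close to $1$ is exactly the missing glue, and the caveat about matching the functional to \cite{MR2475963} is well placed; the paper does not spell that out either.
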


%%%

\begin{example}\label{nneexm1}
Let $\T=q^{\Z}\cup\{0\}$, where $q\in(1,\infty)_{\R}$, and consider the $q$-difference equation
\begin{equation}
\mathrm{D}_{q}^{n}x(t)+\frac{b_{0}}{t^{n}}x(t/q^{\beta_{0}})=0
\quad\text{for}\ t\in{}q^{\N},\label{nneexm1eq1}
\end{equation}
where $n\in\N$, $b_{0}\in\R^{+}$ and $\beta_{0}\in\N$.
Remark~\ref{pirmk1} and
\begin{equation}
\int_{1}^{\infty}\frac{b_{0}}{\eta}\Delta\eta=\infty,\notag
\end{equation}
readily imply \eqref{nnethm2eq1}.
We compute
\begin{equation}
\begin{aligned}[]
\int_{t/q^{\beta_{0}}}^{t}\frac{b_{0}}{\eta^{n}}\hf{n-1}\bigl(\eta/q^{\beta_{0}},1\bigr)\Delta\eta
={}&\int_{t/q^{\beta_{0}}}^{t}\frac{b_{0}}{\eta^{n}}\frac{\hf{n-1}\bigl(\eta/q^{\beta_{0}},1\bigr)}{(\eta/q^{\beta_{0}})^{n-1}}\biggl(\frac{\eta}{q^{\beta_{0}}}\bigg)^{n-1}\Delta\eta\\
\sim{}&\frac{1}{q^{\beta_{0}(n-1)}\Gamma_{q}(n-1)}\int_{t/q^{\beta_{0}}}^{t}\frac{b_{0}}{\eta}\Delta\eta\\
={}&\frac{(q-1)b_{0}\beta_{0}}{q^{\beta_{0}(n-1)}\Gamma_{q}(n-1)}
\end{aligned}\notag
\end{equation}
for $t\in{}q^{\N}$.
In view of \cite[Example~3.3]{MR2475963}, \eqref{nnecrl1eq1} reduces to
\begin{equation}
\frac{(q-1)b_{0}\beta_{0}}{q^{\beta_{0}(n-1)}\Gamma_{q}(n-1)}
>\biggl(\frac{\beta_{0}}{\beta_{0}+1}\biggr)^{\beta_{0}+1}.\label{nneexm1eq2}
\end{equation}
Hence, if \eqref{nneexm1eq2} holds, then every solution of \eqref{nneexm1eq1} oscillates when $n$ is even while
oscillates or tends to zero asymptotically when $n$ is odd.
\end{example}

%%%

\subsection{Neutral equations}\label{ne}

%%%

In this subsection, we extend our results to higher-order neutral dynamic equations.
First two theorems here consider the first range \ref{r1}.

%%%

\begin{theorem}\label{nethm1}
Assume that $n\in\N$ is even and \ref{r1} hold.
Moreover, assume that there exists $\lambda\in(0,1)_{\R}$ such that the first-order delay dynamic equation
\begin{equation}
x^{\Delta}(t)+\lambda\bigl[1-A\bigl(\beta(t)\bigr)\bigr]B(t)\hf{n-1}\bigl(\beta(t),t_{0}\bigr)x\bigl(\beta(t)\bigr)=0\quad\text{for}\ t\in[t_{0},\infty)_{\T}\label{nethm1eq1}
\end{equation}
is oscillatory.
Then, \eqref{introeq1} is also oscillatory.
\end{theorem}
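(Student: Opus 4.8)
The plan is to reduce the neutral equation \eqref{introeq1} to the nonneutral situation already handled in Theorem~\ref{nnethm1}, using Theorem~\ref{drthm2}\,\ref{drthm2it1} as the bridge. Assume for contradiction that \eqref{introeq1} has a nonoscillatory solution $x$; without loss of generality take $x$ eventually positive (the case $x<0$ is symmetric since the equation is linear in $x$). Since $n$ is even and \ref{r1} holds, Theorem~\ref{drthm2}\,\ref{drthm2it1} tells us that the nonneutral equation
\begin{equation}
y^{\Delta^{n}}(t)+\bigl[1-A\bigl(\beta(t)\bigr)\bigr]B(t)y\bigl(\beta(t)\bigr)=0
\quad\text{for}\ t\in[t_{0},\infty)_{\T}\notag
\end{equation}
also has a nonoscillatory solution. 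This equation is exactly of the form \eqref{deeq2} with the coefficient $B$ there replaced by $\bigl[1-A(\beta(\cdot))\bigr]B(\cdot)$, which is in $\crd{}([t_{0},\infty)_{\T},\R_{0}^{+})$ because $A$ takes values in $[0,1]_{\R}$ and $B\geq0$.

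Next I would invoke Theorem~\ref{nnethm1} in contrapositive form, applied to this modified nonneutral equation: if the associated first-order delay dynamic equation
\begin{equation}
x^{\Delta}(t)+\lambda\bigl[1-A\bigl(\beta(t)\bigr)\bigr]B(t)\hf{n-1}\bigl(\beta(t),t_{0}\bigr)x\bigl(\beta(t)\bigr)=0\notag
\end{equation}
is oscillatory for some $\lambda\in(0,1)_{\R}$, then the modified \eqref{deeq2} is oscillatory too. But this first-order equation is precisely \eqref{nethm1eq1}, which is oscillatory by hypothesis. Hence the modified nonneutral equation is oscillatory, contradicting the previous paragraph. This contradiction establishes that \eqref{introeq1} has no eventually positive (hence no nonoscillatory) solution, i.e.\ \eqref{introeq1} is oscillatory.

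The only genuine subtlety is making sure the two black boxes interlock cleanly: Theorem~\ref{drthm2}\,\ref{drthm2it1} is stated for \eqref{introeq1} and produces a solution of \eqref{drthm2eq1}, which one must recognize as an instance of \eqref{deeq2} with the new coefficient; and Theorem~\ref{nnethm1} must then be applied with that new coefficient throughout — in particular the weight $\hf{n-1}(\beta(\cdot),t_{0})$ multiplying it in the first-order comparison equation, so that one lands exactly on \eqref{nethm1eq1} and not on some slightly different equation. Beyond this bookkeeping, and the routine observation that $\bigl[1-A(\beta(\cdot))\bigr]B(\cdot)\in\crd{}([t_{0},\infty)_{\T},\R_{0}^{+})$ so that the quoted theorems apply, there is no real obstacle; the proof is a short two-step deduction from results already in hand.
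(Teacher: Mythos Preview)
Your proposal is correct and follows essentially the same route as the paper: reduce the neutral equation to the nonneutral one via Theorem~\ref{drthm2}\,\ref{drthm2it1}, then use the Philos-inequality machinery to compare with the first-order equation \eqref{nethm1eq1}. The only cosmetic difference is that you invoke Theorem~\ref{nnethm1} as a black box, whereas the paper re-derives its content inline (Kiguradze's lemma, Corollary~\ref{picrl1}, then Theorem~\ref{drthm1}); your packaging is in fact slightly cleaner and avoids repetition.
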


\begin{proof}
Assume, on the contrary, that \eqref{introeq1} has a nonoscillatory solution.
Then, by Theorem~\ref{drthm2}\,\ref{drthm2it1}, \eqref{drthm2eq1} also has a nonoscillatory solution.
Without loss of generality, assume that $x$ is an eventually positive solution of \eqref{drthm2eq1}.
There exists $t_{1}\in[t_{0},\infty)_{\T}$ such that $x(t),x(\alpha(t)),x(\beta(t))>0$ for all $t\in[t_{1},\infty)_{\T}$.
It follows from \hyperref[tllm1]{Kiguradze's lemma} that there exist $t_{2}\in[t_{1},\infty)_{\T}$ and
$m\in[0,n)_{2\Z-1}$ such that for all $t\in[t_{2},\infty)_{\T}$,
we have $x^{\Delta^{k}}(t)>0$ for all $k\in[0,m)_{\Z}$ and $(-1)^{m+k}x^{\Delta^{k}}(t)>0$ for all $k\in[m,n)_{\Z}$.
In particular, $x$ is positive and increasing on $[t_{2},\infty)_{\T}$.
By Corollary~\ref{picrl1}, \eqref{nnethm1prfeq1} holds for all $t\in[t_{3},\infty)_{\T}$, where $t_{3}\in[t_{2},\infty)_{\T}$.
Substituting \eqref{nnethm1prfeq1} into \eqref{introeq1}, and using the nondecreasing nature of $x(\beta(\cdot))$,
we obtain
\begin{equation}
x^{\Delta^{n}}(t)+\lambda{}\bigl[1-A\bigl(\beta(t)\bigr)\bigr]B(t)\hf{n-1}\bigl(\beta(t),t_{0}\bigr)x^{\Delta^{n-1}}\bigl(\beta(t)\bigr)\leq0
\quad\text{for all}\ t\in[t_{4},\infty)_{\T},\notag
\end{equation}
where $t_{4}\in[t_{3},\infty)_{\T}$ satisfies $\beta(t_{4})\geq{}t_{3}$.
Note that $x^{\Delta^{n-1}}$ is positive on $[t_{4},\infty)_{\T}$ and satisfies
\begin{equation}
y^{\Delta}(t)+\lambda{}\bigl[1-A\bigl(\beta(t)\bigr)\bigr]B(t)\hf{n-1}\bigl(\beta(t),t_{0}\bigr)y\bigl(\beta(t)\bigr)\leq0
\quad\text{for all}\ t\in[t_{4},\infty)_{\T}.\notag
\end{equation}
By Theorem~\ref{drthm1} (see also \cite[Theorem~3.1 and Corollary~4.2]{MR2683912}),
this implies that \eqref{nnethm1eq1} also has an eventually positive solution.
This is a contradiction and the proof is complete.
\end{proof}

%%%

\begin{corollary}\label{necrl1}
Assume that $n\in\N$ is even and \ref{r1} hold.
If
\begin{equation}
\liminf_{t\to\infty}\inf_{\substack{-\lambda[1-A(\beta(\cdot))]B\hf{n-1}(\beta(\cdot),t_{0})\in\reg{+}\\ \lambda>0}}\biggl\{\frac{1}{\lambda\ef{-\lambda[1-A(\beta(\cdot))]B\hf{n-1}(\beta(\cdot),t_{0})}\bigl(t,\beta(t)\bigr)}\biggr\}>1,\label{necrl1eq1}
\end{equation}
or
\begin{equation}
\liminf_{t\to\infty}\int_{\beta(t)}^{t}\bigl[1-A\bigl(\beta(\eta)\bigr)\bigr]B(\eta)\hf{n-1}\bigl(\beta(\eta),t_{0}\bigr)\Delta\eta>\gamma\label{necrl1eq2}
\end{equation}
and
\begin{equation}
\limsup_{t\to\infty}\int_{\beta(t)}^{\sigma(t)}\bigl[1-A\bigl(\beta(\eta)\bigr)\bigr]B(\eta)\hf{n-1}\bigl(\beta(\eta),t_{0}\bigr)\Delta\eta
>1-\Bigl(1-\sqrt{1-\gamma}\Bigr)^{2},\label{necrl1eq3}
\end{equation}
then every solution of \eqref{introeq1} oscillates.
\end{corollary}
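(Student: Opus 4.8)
The plan is to reduce Corollary~\ref{necrl1} to Theorem~\ref{nethm1} by recognizing that \eqref{nethm1eq1} is a first-order delay dynamic equation of the same shape as the equations appearing in the oscillation criteria of \cite{MR2475963} and \cite{MR3518254}. Write $P(t):=\lambda\bigl[1-A\bigl(\beta(t)\bigr)\bigr]B(t)\hf{n-1}\bigl(\beta(t),t_{0}\bigr)$, so that \eqref{nethm1eq1} is $x^{\Delta}(t)+P(t)x\bigl(\beta(t)\bigr)=0$. Under range \ref{r1} the factor $1-A\bigl(\beta(t)\bigr)$ is bounded between a positive constant and $1$ for large $t$, so $P$ is eventually nonnegative and rd-continuous, and $\beta$ is the nondecreasing unbounded delay already assumed in the standing hypotheses. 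First I would quote the two standard first-order oscillation tests: the exponential-type criterion (which gives oscillation of $x^{\Delta}+Px\circ\beta=0$ when $\liminf_{t\to\infty}\inf_{\lambda>0,\,-\lambda P\in\reg{+}}\{1/(\lambda\ef{-\lambda P}(t,\beta(t)))\}>1$) and the iterated integral-type criterion of \cite{MR3518254} (which gives oscillation under a $\liminf$ condition past a constant $\gamma$ together with a $\limsup$ condition past $1-(1-\sqrt{1-\gamma})^{2}$).

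The key steps, in order, are: (1) fix $\lambda\in(0,1)_{\R}$ and identify $P$ as above, noting $P\in\crd{}$ and $P\geq0$ eventually; (2) verify that hypothesis \eqref{necrl1eq1} is exactly the exponential-type oscillation condition for $x^{\Delta}+Px\circ\beta=0$ with this $P$ (the $\lambda$ inside the infimum in \eqref{necrl1eq1} plays the role of the free multiplier in the test, so one chooses the witness $\lambda$ from the test), hence \eqref{nethm1eq1} is oscillatory and Theorem~\ref{nethm1} applies; (3) alternatively, verify that \eqref{necrl1eq2}--\eqref{necrl1eq3} are the integral-type oscillation conditions for the same equation, so again \eqref{nethm1eq1} is oscillatory for a suitable $\lambda$ and Theorem~\ref{nethm1} applies; (4) in either case conclude by Theorem~\ref{nethm1} that \eqref{introeq1} oscillates. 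The pattern is identical to the proof of Corollary~\ref{nnecrl1} from Theorem~\ref{nnethm1}, with $B\hf{n-1}(\beta(\cdot),t_{0})$ replaced throughout by $[1-A(\beta(\cdot))]B\hf{n-1}(\beta(\cdot),t_{0})$.

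The main obstacle, such as it is, is a bookkeeping point rather than a genuine difficulty: one must make sure the free parameter $\lambda$ in the statement of Theorem~\ref{nethm1} and the free parameter $\lambda$ appearing inside the infimum of \eqref{necrl1eq1} (and implicitly governing the choice in \eqref{necrl1eq2}--\eqref{necrl1eq3}) are handled consistently. Concretely, the hypothesis \eqref{necrl1eq1} asserts that the exponential test holds for \emph{some} admissible $\lambda>0$; for that $\lambda$ (rescaled into $(0,1)_{\R}$ if necessary, which is harmless since multiplying the coefficient by a constant in $(0,1)$ only strengthens oscillation-type integral conditions and the exponential test is stated with its own free multiplier) the equation \eqref{nethm1eq1} is oscillatory, which is precisely what Theorem~\ref{nethm1} needs. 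The regressivity side condition $-\lambda[1-A(\beta(\cdot))]B\hf{n-1}(\beta(\cdot),t_{0})\in\reg{+}$ is part of the test's hypothesis and carries over verbatim. Once this alignment is made explicit, the corollary is immediate; no new estimates beyond Theorem~\ref{nethm1} and the cited first-order tests are required.
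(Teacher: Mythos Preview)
Your proposal is correct and matches the paper's approach: the paper does not give an explicit proof of Corollary~\ref{necrl1} at all, presenting it (like Corollary~\ref{nnecrl1}) as an immediate consequence of combining Theorem~\ref{nethm1} with the first-order oscillation tests of \cite{MR2475963} and \cite{MR3518254}. Your discussion of the $\lambda$-bookkeeping is more detailed than anything the paper provides, but the reduction you describe is exactly the intended one.
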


%%%

We would like to mention that Theorem~\ref{nethm1} includes \cite[Theorem~1]{MR2250359}.

%%%

\begin{example}\label{neexm1}
Let $\T=\Z$ and consider the difference equation
\begin{equation}
\Delta^{n}[x(t)+a_{0}x(t-\alpha_{0})]+\frac{b_{0}}{t^{p}}x(t-\beta_{0})=0
\quad\text{for}\ t\in\N_{0},\label{neexm1eq1}
\end{equation}
where $n\in\N$ is even, $a_{0}\in(0,1)_{\R}$, $b_{0}\in\R^{+}$, $p\in\R_{0}^{+}$, $\alpha_{0},\beta_{0}\in\N$.
By \cite[Theorem~3\,(i)]{MR1635228}, \eqref{neexm1eq1} is oscillatory if $p\leq1$.
By \cite[Theorem~1\,(a)]{MR1429478}, \eqref{neexm1eq1} is oscillatory if $p<n-1$, or
\begin{equation}
p=n-1
\quad\text{and}\quad
b_{0}(1-a_{0})>\frac{(2^{n-1})^{(n-1)}}{(n-1)!}\frac{\beta_{0}^{\beta_{0}}}{(\beta_{0}+1)^{\beta_{0}+1}},\notag
\end{equation}
where $^{(\cdot)}$ denotes the falling factorial function.
Applying Corollary~\ref{necrl1} to \eqref{neexm1eq1} drops the factor $(2^{n-1})^{(n-1)}$ above (see Remark~\ref{pirmk2}), i.e.,
$p<n-1$, or
\begin{equation}
p=n-1
\quad\text{and}\quad
b_{0}(1-a_{0})>\frac{1}{(n-1)!}\frac{\beta_{0}^{\beta_{0}}}{(\beta_{0}+1)^{\beta_{0}+1}}\notag
\end{equation}
implies oscillation of all solutions of \eqref{neexm1eq1}.
\end{example}

%%%

Before we proceed to the next theorem, we would like to remark that \ref{r1} establishes equivalence between divergence of the integrals
\begin{equation}
\int_{t_{0}}^{\infty}B(\eta)\hf{n-1}\bigl(t_{0},\sigma(\eta)\bigr)\Delta\eta
\quad\text{and}\quad
\int_{t_{0}}^{\infty}\bigl[1-A\bigl(\beta(t)\bigr)\bigr]B(\eta)\hf{n-1}\bigl(t_{0},\sigma(\eta)\bigr)\Delta\eta.\notag
\end{equation}

%%%

\begin{theorem}\label{nethm2}
Assume that $n\in\N$ is odd, \ref{r1} and \eqref{nnethm2eq1} hold.
Moreover, assume that there exists $\lambda\in(0,1)_{\R}$ such that the first-order delay dynamic equation \eqref{nethm1eq1} is oscillatory.
Then, every solution of \eqref{introeq1} oscillates or tends to zero asymptotically.
\end{theorem}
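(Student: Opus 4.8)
\textbf{Proof proposal for Theorem~\ref{nethm2}.}
The plan is to mimic the structure of the proof of Theorem~\ref{nethm1}, using the odd-order reduction Theorem~\ref{drthm3} (or rather Theorem~\ref{drthm2}\,\ref{drthm2it2}) in place of Theorem~\ref{drthm2}\,\ref{drthm2it1}, and then to split into Kiguradze cases exactly as in Theorem~\ref{nnethm2}. First I would argue by contradiction: suppose \eqref{introeq1} has a nonoscillatory solution which does not tend to zero asymptotically; then by Theorem~\ref{drthm2}\,\ref{drthm2it2} the equation \eqref{drthm2eq1} has a nonoscillatory solution which does not tend to zero, and without loss of generality take it to be an eventually positive solution $x$, say $x(t),x(\alpha(t)),x(\beta(t))>0$ on $[t_{1},\infty)_{\T}$. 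Applying Kiguradze's lemma (Lemma~\ref{tllm1}) yields $t_{2}\in[t_{1},\infty)_{\T}$ and $m\in[0,n)_{\Z}$ with $(-1)^{n-m}$ the relevant sign; since $n$ is odd here, $m$ is even, so $m\in[0,n)_{2\Z}$, and we then distinguish the two cases of the parity/size of $m$.

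In the case $m\in[2,n)_{2\Z}$ I would proceed verbatim as in the proof of Theorem~\ref{nethm1}: $x$ is positive and increasing on $[t_{2},\infty)_{\T}$, so Corollary~\ref{picrl1} gives, for the prescribed $\lambda\in(0,1)_{\R}$, the estimate \eqref{nnethm1prfeq1}, namely $x(t)\geq\lambda\hf{n-1}(t,t_{0})x^{\Delta^{n-1}}(t)$ on $[t_{3},\infty)_{\T}$; substituting into \eqref{drthm2eq1} and using that $x(\beta(\cdot))$ is nondecreasing (as $x$ is increasing and $\beta$ is nondecreasing) produces
\begin{equation}
x^{\Delta^{n}}(t)+\lambda\bigl[1-A\bigl(\beta(t)\bigr)\bigr]B(t)\hf{n-1}\bigl(\beta(t),t_{0}\bigr)x^{\Delta^{n-1}}\bigl(\beta(t)\bigr)\leq0
\quad\text{for}\ t\in[t_{4},\infty)_{\T},\notag
\end{equation}
with $\beta(t_{4})\geq t_{3}$; then $y:=x^{\Delta^{n-1}}>0$ on $[t_{4},\infty)_{\T}$ is an eventually positive solution of the dynamic inequality associated with \eqref{nethm1eq1}, so by Theorem~\ref{drthm1} the equation \eqref{nethm1eq1} is nonoscillatory, contradicting the hypothesis.

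In the case $m=0$, all of $x^{\Delta^{k}}$ for $k\in[0,n)_{\Z}$ have the sign $(-1)^{k}$, so in particular $x$ is positive and nonincreasing, hence bounded; then, exactly as in case (C2) of the proof of Theorem~\ref{nnethm2}, the divergence hypothesis \eqref{nnethm2eq1}, through \cite[Theorem~3.1]{MR2557101}, forces $\lim_{t\to\infty}x(t)=0$. This contradicts the choice of $x$ as a solution of \eqref{drthm2eq1} that does not tend to zero (here I would use the remark preceding the theorem, that under \ref{r1} the divergence of $\int_{t_{0}}^{\infty}B(\eta)\hf{n-1}(t_{0},\sigma(\eta))\Delta\eta$ is equivalent to the divergence of $\int_{t_{0}}^{\infty}[1-A(\beta(\eta))]B(\eta)\hf{n-1}(t_{0},\sigma(\eta))\Delta\eta$, so that the Kiguradze/Philos machinery applies to \eqref{drthm2eq1} with the same non-integrability condition). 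Either case yields a contradiction, completing the proof.

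I expect the only genuinely delicate point to be the bookkeeping in the $m=0$ case: one must be careful that Theorem~\ref{drthm2}\,\ref{drthm2it2} is being invoked correctly — it preserves the ``does not tend to zero'' property when passing from \eqref{introeq1} to \eqref{drthm2eq1} — and that the application of \cite[Theorem~3.1]{MR2557101} to \eqref{drthm2eq1} is legitimate, which is exactly why the equivalence of the two divergence integrals under \ref{r1} is needed. The case $m\in[2,n)_{2\Z}$ is a routine transcription of the even-order argument and should present no obstacle.
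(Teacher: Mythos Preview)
Your proposal is correct and follows essentially the same approach as the paper: contradiction, reduction via Theorem~\ref{drthm2}\,\ref{drthm2it2} to a positive solution of \eqref{drthm2eq1} that does not tend to zero, Kiguradze case split into $m\in[2,n)_{2\Z}$ (handled as in Theorem~\ref{nethm1}) and $m=0$ (handled via \eqref{nnethm2eq1} and \cite[Theorem~3.1]{MR2557101}). Your explicit remark that the divergence of the integral with the factor $[1-A(\beta(\cdot))]$ is needed for the $m=0$ case, and follows from \eqref{nnethm2eq1} under \ref{r1}, is a point the paper uses silently; the only slip is the passing mention of Theorem~\ref{drthm3}, which concerns range \ref{r2} and is not used here.
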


\begin{proof}
Assume the contrary that \eqref{introeq1} admits a nonoscillatory solution,
which asymptotically does not tend to zero.
By Theorem~\ref{drthm2}\,\ref{drthm2it2}, \eqref{drthm2eq1} also has a solution of the same kind.
Without loss of generality, assume that $x$ is an eventually positive solution of \eqref{drthm2eq1},
which does not tend to zero at infinity.
Then, $x(t),x(\alpha(t)),x(\beta(t))>0$ for all $t\in[t_{1},\infty)_{\T}$, where $t_{1}\in[t_{0},\infty)_{\T}$.
It follows from \hyperref[tllm1]{Kiguradze's lemma} that there exist $t_{2}\in[t_{1},\infty)_{\T}$ and
$m\in[0,n)_{2\Z}$ such that for all $t\in[t_{2},\infty)_{\T}$,
we have $x^{\Delta^{k}}(t)>0$ for all $k\in[0,m)_{\Z}$ and $(-1)^{m+k}x^{\Delta^{k}}(t)>0$ for all $k\in[m,n)_{\Z}$.
We have the following two possible cases.
\begin{enumerate}[label={(C\arabic*)},ref={(C\arabic*)},leftmargin={0pt},itemindent={*},noitemsep,topsep=0pt,parsep=0pt,partopsep=0pt]
\setcounter{enumi}{0}
\item If $m\in[2,n)_{2\Z}$, then we proceed as in the proof of Theorem~\ref{nethm1} and arrive at a contradiction.
\item If $m=0$, then $x$ is positive and decreasing, i.e., $x$ is bounded.
    By virtue of \cite[Theorem~3.1]{MR2557101}, $\lim_{t\to\infty}x(t)=0$.
    This is a contradiction.
\end{enumerate}
The proof is therefore complete.
\end{proof}
%[label={(\roman*)},ref={(\roman*)},leftmargin={0pt},itemindent={*},noitemsep,topsep=0pt,parsep=0pt,partopsep=0pt]
%%%

\begin{corollary}
Assume that $n\in\N$ is odd, \ref{r1} and \eqref{nnethm2eq1} hold.
If \eqref{necrl1eq1}, or \eqref{necrl1eq2} and \eqref{necrl1eq3},
then every solution of \eqref{introeq1} oscillates or tends to zero asymptotically.
\end{corollary}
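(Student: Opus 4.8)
The plan is to reduce this corollary to the combination of Theorem~\ref{nethm2} with the first-order oscillation criteria of \cite{MR2475963} and \cite{MR3518254}, exactly as Corollary~\ref{necrl1} is obtained from Theorem~\ref{nethm1} in the even-order case. The hypothesis of Theorem~\ref{nethm2} that I need to verify is the existence of some $\lambda\in(0,1)_{\R}$ for which the first-order delay dynamic equation \eqref{nethm1eq1} is oscillatory; once that is in place, Theorem~\ref{nethm2} immediately gives that every solution of \eqref{introeq1} oscillates or tends to zero asymptotically (the standing assumptions $n$ odd, \ref{r1}, and \eqref{nnethm2eq1} are already part of the corollary's hypotheses).

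First I would recall the relevant first-order tests. Writing $p(\cdot):=\lambda[1-A(\beta(\cdot))]B(\cdot)\hf{n-1}(\beta(\cdot),t_{0})$ for the coefficient in \eqref{nethm1eq1}, the test from \cite{MR2475963} states that if
\begin{equation}
\liminf_{t\to\infty}\inf_{\substack{-p\in\reg{+}([\beta(t),t)_{\T})\\ \lambda>0}}\biggl\{\frac{1}{\lambda\ef{-p}\bigl(t,\beta(t)\bigr)}\biggr\}>1,\notag
\end{equation}
then \eqref{nethm1eq1} is oscillatory for a suitable $\lambda$; this is precisely condition \eqref{necrl1eq1}. Alternatively, the test from \cite{MR3518254} (a Koplatadze--Chanturia type $\liminf$/$\limsup$ pair on time scales) guarantees oscillation of \eqref{nethm1eq1} whenever
\begin{equation}
\liminf_{t\to\infty}\int_{\beta(t)}^{t}\bigl[1-A\bigl(\beta(\eta)\bigr)\bigr]B(\eta)\hf{n-1}\bigl(\beta(\eta),t_{0}\bigr)\Delta\eta>\gamma\notag
\end{equation}
and
\begin{equation}
\limsup_{t\to\infty}\int_{\beta(t)}^{\sigma(t)}\bigl[1-A\bigl(\beta(\eta)\bigr)\bigr]B(\eta)\hf{n-1}\bigl(\beta(\eta),t_{0}\bigr)\Delta\eta>1-\bigl(1-\sqrt{1-\gamma}\bigr)^{2}\notag
\end{equation}
hold for some $\gamma\in[0,1)_{\R}$; here one absorbs the factor $\lambda$ into the strict inequalities by choosing $\lambda$ close enough to $1$, using that these conditions are strict. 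These are exactly \eqref{necrl1eq2} and \eqref{necrl1eq3}.

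So the argument runs: assume \eqref{necrl1eq1}, or else \eqref{necrl1eq2} together with \eqref{necrl1eq3} for some admissible $\gamma$. In the first case, \eqref{necrl1eq1} is the verbatim hypothesis of the \cite{MR2475963} criterion applied to \eqref{nethm1eq1}, so that equation is oscillatory for the corresponding $\lambda\in(0,1)_{\R}$. In the second case, since the $\liminf$ and $\limsup$ in \eqref{necrl1eq2}--\eqref{necrl1eq3} are strict, one can pick $\lambda\in(0,1)_{\R}$ sufficiently close to $1$ so that multiplying the integrands by $\lambda$ preserves both inequalities; then the \cite{MR3518254} criterion applies to \eqref{nethm1eq1} and yields oscillation. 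In either case the hypothesis of Theorem~\ref{nethm2} is met, and Theorem~\ref{nethm2} delivers the conclusion that every solution of \eqref{introeq1} oscillates or tends to zero asymptotically.

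The only mildly delicate point is the second case: one must check that the strictness of \eqref{necrl1eq2} and \eqref{necrl1eq3} genuinely leaves room to insert the factor $\lambda<1$ while still meeting whatever numerical threshold \cite{MR3518254} requires. This is routine — the thresholds $\gamma$ and $1-(1-\sqrt{1-\gamma})^{2}$ are fixed, and $\lambda$ can be chosen after $\gamma$ — but it is the step where one is actually using that the hypotheses are strict rather than merely non-strict. Everything else is a direct citation of earlier results, so no essential new obstacle arises; this corollary is the odd-order analogue of Corollary~\ref{necrl1} and its proof mirrors that one line for line, with Theorem~\ref{nethm1} replaced by Theorem~\ref{nethm2}.
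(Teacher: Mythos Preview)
Your proposal is correct and follows exactly the approach the paper intends: the corollary is obtained by combining Theorem~\ref{nethm2} with the first-order oscillation tests of \cite{MR2475963} and \cite{MR3518254}, just as Corollary~\ref{necrl1} was obtained from Theorem~\ref{nethm1}. The paper gives no explicit proof for this corollary, and your argument (including the observation about absorbing $\lambda$ via strictness of the inequalities) fills in precisely the expected details.
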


%%%

The following remark can be extracted from the first part of the proof of the above theorem.

%%%

\begin{remark}
Under the conditions of Theorem~\ref{nethm2} except \eqref{nnethm2eq1},
we can prove that every unbounded solution of \eqref{introeq1} oscillates.
\end{remark}

%%%

The final result of this section focuses on the latter range \ref{r2}.

%%%

\begin{theorem}\label{nethm3}
Assume that $n\in\N$, \ref{r2} and \eqref{nnethm2eq1} hold.
Moreover, assume that there exists $\lambda\in(0,1)_{\R}$ such that the first-order delay dynamic equation \eqref{nnethm1eq1} is oscillatory.
Then, every solution of \eqref{introeq1} oscillates or tends to zero asymptotically.
\end{theorem}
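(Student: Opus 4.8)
The plan is to argue by contradiction in direct analogy with the proofs of Theorem~\ref{nethm2} and Theorem~\ref{nnethm2}, but now routing the neutral term through Theorem~\ref{drthm3} rather than through Theorem~\ref{drthm2}. So suppose \eqref{introeq1} has a nonoscillatory solution that does not tend to zero at infinity; without loss of generality it is eventually positive. The first step is to invoke Theorem~\ref{drthm3}: since \ref{r2} holds and $n\in\N$, this nonoscillatory non-vanishing solution of \eqref{introeq1} produces a nonoscillatory solution of \eqref{deeq2}, which again we may take to be eventually positive and not tending to zero at infinity. From this point on the neutral coefficient $A$ has been eliminated, and the argument is exactly the one behind Theorem~\ref{nnethm2}.

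Next I would apply \hyperref[tllm1]{Kiguradze's lemma} to the eventually positive solution $x$ of \eqref{deeq2}: there exist $t_{2}\in[t_{0},\infty)_{\T}$ and $m\in[0,n)_{\Z}$ such that $x^{\Delta^{k}}(t)>0$ for $k\in[0,m)_{\Z}$ and $(-1)^{m+k}x^{\Delta^{k}}(t)>0$ for $k\in[m,n)_{\Z}$ on $[t_{2},\infty)_{\T}$. Because $x$ solves \eqref{deeq2} with $B\geq0$ and $x(\beta(\cdot))>0$, we have $x^{\Delta^{n}}\leq0$, which forces $(n+m)$ to be odd. I would then split into the two cases used before. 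If $m\in[1,n)_{\Z}$ (equivalently $m\geq1$), then $x$ is positive and increasing near infinity, Corollary~\ref{picrl1} gives $x(t)\geq\lambda\hf{n-1}(t,t_{0})x^{\Delta^{n-1}}(t)$ for some $\lambda\in(0,1)_{\R}$ eventually, and substituting this into \eqref{deeq2} together with the nondecreasing nature of $x(\beta(\cdot))$ yields that $y:=x^{\Delta^{n-1}}$ is an eventually positive solution of the inequality $y^{\Delta}(t)+\lambda B(t)\hf{n-1}(\beta(t),t_{0})y(\beta(t))\leq0$; by Theorem~\ref{drthm1} this contradicts the assumed oscillation of \eqref{nnethm1eq1}. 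If $m=0$, then $x$ is positive and decreasing, hence bounded, and \cite[Theorem~3.1]{MR2557101} together with the divergence condition \eqref{nnethm2eq1} forces $\lim_{t\to\infty}x(t)=0$, contradicting the choice of $x$ as not tending to zero.

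One subtlety to watch is whether both parities of $m$ can actually arise here: unlike the even-order case in Theorem~\ref{nethm1}, we do not know the parity of $n$, so $m$ could be either even or odd, and in particular $m=0$ is genuinely possible, which is exactly why the hypothesis \eqref{nnethm2eq1} is needed and why the conclusion weakens to ``oscillates or tends to zero.'' The only real work beyond bookkeeping is checking that Theorem~\ref{drthm3} is applicable, i.e.\ that the solution coming out of \eqref{introeq1} is genuinely of the type required (eventually of one sign and not asymptotically zero); this is immediate from the contradiction hypothesis. I expect the main obstacle to be purely expository — stating the reduction cleanly so that the reader sees that after Theorem~\ref{drthm3} one is verbatim in the situation of Theorem~\ref{nnethm2} — rather than any genuinely new estimate. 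I would therefore keep the proof short: reduce via Theorem~\ref{drthm3}, then write ``the remainder of the argument is identical to that of the proof of Theorem~\ref{nnethm2}'' and spell out only the two-case split above.
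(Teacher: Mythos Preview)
Your proposal is correct and matches the paper's own approach: the paper's proof is in fact omitted, with only the instruction to repeat the arguments of Theorem~\ref{nethm1} and Theorem~\ref{nethm2} while invoking Theorem~\ref{drthm3} in place of Theorem~\ref{drthm2}, which is precisely the reduction you carry out. One small expository point: when you apply Corollary~\ref{picrl1} you should say ``for the given $\lambda\in(0,1)_{\R}$'' rather than ``for some $\lambda$,'' since the corollary furnishes the inequality for \emph{every} $\lambda\in(0,1)_{\R}$ and you must pick the one supplied by the hypothesis on \eqref{nnethm1eq1}.
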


\begin{proof}
The proof follows by using similar arguments to that in the proofs of Theorem~\ref{nethm1} and Theorem~\ref{nethm2}
but in that case Theorem~\ref{drthm3} should be applied instead of Theorem~\ref{drthm2}.
Thus, the details of the proof are omitted.
\end{proof}

%%%

\begin{corollary}
Assume that $n\in\N$, \ref{r2} and \eqref{nnethm2eq1} hold.
If \eqref{nnecrl1eq1}, or \eqref{nnecrl1eq2} and \eqref{nnecrl1eq3},
then every solution of \eqref{introeq1} oscillates or tends to zero asymptotically.
\end{corollary}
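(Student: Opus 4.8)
The plan is to deduce the corollary directly from Theorem~\ref{nethm3} by checking that its hypotheses hold; since $n\in\N$, \ref{r2} and \eqref{nnethm2eq1} are assumed outright, the only thing left to verify is the existence of some $\lambda\in(0,1)_{\R}$ for which the first-order comparison equation \eqref{nnethm1eq1} is oscillatory. Once such a $\lambda$ is produced, Theorem~\ref{nethm3} delivers the conclusion verbatim. The whole argument thus parallels the passage from Theorem~\ref{nnethm1} to Corollary~\ref{nnecrl1}, the only change being the weaker conclusion (oscillation \emph{or} decay to zero) dictated by the range \ref{r2}. The substance is to read \eqref{nnecrl1eq1}, or \eqref{nnecrl1eq2} together with \eqref{nnecrl1eq3}, as the known first-order oscillation tests of \cite{MR2475963} and \cite{MR3518254} applied to the equation $y^{\Delta}(t)+\lambda B(t)\hf{n-1}\bigl(\beta(t),t_{0}\bigr)y\bigl(\beta(t)\bigr)=0$, whose delay coefficient is the multiple $\lambda P$ of $P:=B(\cdot)\hf{n-1}(\beta(\cdot),t_{0})$.

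Suppose first that \eqref{nnecrl1eq1} holds, and write $V>1$ for its left-hand value. I would read $V$ as the oscillation functional of \cite{MR2475963} evaluated on the coefficient $P$ (the variable under the infimum being the test's internal parameter), and exploit its positive homogeneity in the coefficient: rescaling the internal infimum variable shows that the same test applied to the coefficient $\lambda P$ returns exactly $\lambda V$. Consequently the test certifies oscillation of \eqref{nnethm1eq1}, whose coefficient is $\lambda P$, precisely when $\lambda V>1$; since $V>1$, any $\lambda\in(1/V,1)_{\R}$ qualifies, and for such $\lambda$ the regressivity requirement $-\lambda P\in\reg{+}$ holds because $P\geq0$. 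This exhibits a $\lambda\in(0,1)_{\R}$ making \eqref{nnethm1eq1} oscillatory.

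Suppose instead that \eqref{nnecrl1eq2} and \eqref{nnecrl1eq3} hold. These are the double-integral oscillation conditions of \cite{MR3518254} written for the coefficient $P$, and here homogeneity is transparent, since $\int_{\beta(t)}^{t}\lambda P\,\Delta\eta=\lambda\int_{\beta(t)}^{t}P\,\Delta\eta$ and likewise for the $\sigma(t)$-integral. Because both inequalities are strict, with $\liminf>\gamma$ and $\limsup>1-\bigl(1-\sqrt{1-\gamma}\bigr)^{2}$, I would choose $\lambda\in(0,1)_{\R}$ close enough to $1$ that both rescaled integrals still exceed the same thresholds $\gamma$ and $1-\bigl(1-\sqrt{1-\gamma}\bigr)^{2}$; the test of \cite{MR3518254} then makes \eqref{nnethm1eq1} oscillatory for that $\lambda$. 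In either case a qualifying $\lambda\in(0,1)_{\R}$ has been produced, so Theorem~\ref{nethm3} applies and every solution of \eqref{introeq1} oscillates or tends to zero asymptotically.

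The main obstacle is the bookkeeping of the multiplier $\lambda$: the hypotheses are strict oscillation inequalities for the coefficient $P$ (effectively the case of multiplier $1$), whereas Theorem~\ref{nethm3} insists on a multiplier strictly below $1$. Overcoming this rests on the positive homogeneity of the two oscillation functionals in the coefficient — immediate for the integral conditions \eqref{nnecrl1eq2}--\eqref{nnecrl1eq3}, and requiring a rescaling of the internal infimum variable for the exponential condition \eqref{nnecrl1eq1} — combined with the strictness of the inequalities, which leaves room to lower the multiplier to an admissible $\lambda\in(0,1)_{\R}$ without destroying oscillation.
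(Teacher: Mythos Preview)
Your proposal is correct and follows exactly the paper's intended route: the paper states this corollary without proof, relying (as with Corollary~\ref{nnecrl1}) on combining Theorem~\ref{nethm3} with the first-order oscillation tests of \cite{MR2475963} and \cite{MR3518254}, and your homogeneity-plus-strictness argument spells out precisely how to pass from the hypotheses on $P=B\hf{n-1}(\beta(\cdot),t_{0})$ to the existence of an admissible $\lambda\in(0,1)_{\R}$. One minor slip: the remark that ``$-\lambda P\in\reg{+}$ holds because $P\geq0$'' is not quite right (positive regressivity of $-\lambda P$ means $1-\mu\lambda P>0$, which is not automatic), but this is harmless since the regressivity constraint is already built into the infimum in \eqref{nnecrl1eq1} and is absorbed by the change of variable $\mu=\lambda\lambda_{0}$ you describe.
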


%%%

\begin{example}[See \protect{\cite[Example~3]{MR2040224}}]\label{neexm2}
Let $\T=\R$, and $n\in\N$ be even.
Consider
\begin{equation}
\bigg[x(t)-\frac{1-\sin(t)}{3}x(t/\alpha_{0})\bigg]^{(n)}+\frac{b_{0}}{t^{n}}x(t/\beta_{0})=0\quad\text{for}\ t\in[1,\infty)_{\R},\label{neexm2eq1}
\end{equation}
where $\alpha_{0}\in(1,\infty)_{\R}$, $\beta_{0}\in[1,\infty)_{\R}$ and $b_{0}\in\R^{+}$.
If we apply Theorem~\ref{nethm3}, the corresponding first-order differential equation is
\begin{equation}
x^{\prime}(t)+\lambda\frac{b_{0}}{\beta_{0}^{n-1}(n-1)!t}x(t/\beta_{0})=0\quad\text{for}\ t\in[1,\infty)_{\R},\label{neexm2eq2}
\end{equation}
where $\lambda\in(0,1)_{\R}$,
which is oscillatory if
\begin{equation}
\frac{b_{0}\ln(\beta_{0})}{\beta_{0}^{n-1}(n-1)!}>\frac{1}{\mathrm{e}}.\notag
\end{equation}
%Integrate[(\[Beta]*u)^(n-1)/(n-1)!b/u^n,{u,\[Beta]*t,t},Assumptions->{0<\[Beta]<1,n\[Element]Integers,n>=2,t>=1}]//FullSimplify
By \cite[Theorem~2, Corollary~5]{MR2040224}, all solutions to \eqref{neexm2eq1} oscillate if
\begin{equation}
\frac{b_{0}}{\beta_{0}^{n-1}(n-1)!(n-1)}>\frac{1}{4}.\notag
\end{equation}
Thus, Theorem~\ref{nethm3} gives a better result when $\beta_{0}^{n-1}>\exp\bigl\{\frac{4}{\mathrm{e}}\bigr\}$
or equivalently $\beta_{0}>\exp\bigl\{\frac{4}{\mathrm{e}(n-1)}\bigr\}$.
%Reduce[E*Log[\[Beta]]>4/(n-1),\[Beta]]
%Table[E^(4/(E(-1+n))),{n,2,10,2}]
%ListPlot[%,AxesOrigin->{0,0},PlotRange->Full,
%AxesLabel->{n,\[Beta]},
%Ticks->{Table[{j,2j},{j,1,Length[%]}],N[%,3]}]
For instance, when $n=4$, we have $\beta_{0}>1.63314$.
\end{example}

%%%

\section{Appendix}\label{app}

\subsection{Appendix A: Time scales essentials}\label{tse}

%%%

A \emph{time scale}, which inherits the standard topology on $\R$, is a nonempty closed subset of reals.
Here, and later throughout this paper, a time scale will be denoted by the symbol $\T$, and the intervals with a subscript $\T$ are used to denote the intersection of the usual interval with $\T$.
For $t\in\T$, we define the \emph{forward jump operator} $\sigma:\T\to\T$ by $\sigma(t):=\inf(t,\infty)_{\T}$ while the \emph{backward jump operator} $\rho:\T\to\T$ is defined by $\rho(t):=\sup(-\infty,t)_{\T}$, and the \emph{graininess function} $\mu:\T\to\R_{0}^{+}$ is defined to be $\mu(t):=\sigma(t)-t$.

\begin{table}[h!tb]
  \centering
    \begin{tabular}{c|ccc}
    \hline
    $\T$ & $\R$ & $h\Z$, $h\in\R^{+}$ & $q^{\N_{0}}$, $q\in(1,\infty)_{\R}$ \\ \hline
    $\sigma(t)$ & $t$ & $t+h$ & $qt$ \\
    $\rho(t)$ & $t$ & $t-h$ & $t/q$ \\
    $\mu(t)$ & $0$ & $h$ & $(q-1)t$ \\
    \hline
    \end{tabular}
  \caption{The explicit forms of the forward jump, the backward jump and the graininess on some time scales.}\label{tsetbl1}
\end{table}

A point $t\in\T$ is called \emph{right-dense} if $\sigma(t)=t$ and/or equivalently $\mu(t)=0$ holds; otherwise, it is called \emph{right-scattered}, and similarly \emph{left-dense} and \emph{left-scattered} points are defined with respect to the backward jump operator.
For $f:\T\to\R$ and $t\in\T$, the $\Delta$-derivative $f^{\Delta}(t)$ of $f$ at the point $t$ is defined to be the number, provided it
exists, with the property that, for any $\varepsilon>0$, there is a neighborhood $U$ of $t$ such that
\begin{equation}
|[f^{\sigma}(t)-f(s)]-f^{\Delta}(t)[\sigma(t)-s]|\leq\varepsilon|\sigma(t)-s|
\quad\text{for all}\ s\in U,\notag
\end{equation}
where $f^{\sigma}:=f\circ\sigma$ on $\T$.
We mean the $\Delta$-derivative of a function when we only say derivative unless otherwise is specified.

\begin{table}[h!tb]
  \centering
    \begin{tabular}{c|ccc}
    \hline
    $\T$ & $\R$ & $h\Z$, $h\in\R^{+}$ & $q^{\N_{0}}$, $q\in(1,\infty)_{\R}$ \\ \hline
    $f^{\Delta}(t)$ & $f^{\prime}(t)$ & $\dfrac{f(t+h)-f(t)}{h}$ & $\dfrac{f(qt)-f(t)}{(q-1)t}$ \\
    \hline
    \end{tabular}
  \caption{The explicit forms of the delta derivative on some time scales.}\label{tsetbl2}
\end{table}

A function $f$ is called \emph{rd-continuous} provided that it is continuous at right-dense points in
$\T$, and has a finite limit at left-dense points, and the \emph{set of rd-continuous functions}
is denoted by $\crd{}(\T,\R)$.
The set of functions $\crd{1}(\T,\R)$ includes the functions whose derivative is in $\crd{}(\T,\R)$ too.
For a function $f\in\crd{1}(\T,\R)$, the so-called \emph{simple useful formula} holds
\begin{equation}
f^{\sigma}(t)=f(t)+\mu(t)f^{\Delta}(t)
\quad\text{for all}\ t\in\T^{\kappa},\notag
\end{equation}
where $\T^{\kappa}:=\T\backslash\{\sup\T\}$ if $\sup\T<\infty$ and satisfies $\rho(\sup\T)<\sup\T$; otherwise, $\T^{\kappa}:=\T$.
For $s,t\in\T$ and a function $f\in\crd{}(\T,\R)$, the $\Delta$-integral of $f$ is defined by
\begin{equation}
\int_{s}^{t}f(\eta)\Delta\eta=F(t)-F(s)
\quad\text{for}\ s,t\in\T,\notag
\end{equation}
where $F\in\crd{1}(\T,\R)$ is an antiderivative of $f$, i.e., $F^{\Delta}=f$ on $\T^{\kappa}$.

\begin{table}[h!tb]
  \centering
    \begin{tabular}{c|ccc}
    \hline
    $\T$ & $\R$ & $h\Z$, $h\in\R^{+}$ & $q^{\N_{0}}$, $q\in(1,\infty)_{\R}$ \\ \hline
    $\displaystyle\int_{s}^{t}f(\eta)\Delta\eta$ & $\displaystyle\int_{s}^{t}f(\eta)\mathrm{d}\eta$ & $h\displaystyle\sum_{\eta=s/q}^{t/q-1}f(h\eta)$ & $(q-1)\displaystyle\sum_{\eta=\log_{q}(s)}^{\log_{q}(t/q)}f(q^{\eta})q^{\eta}$ \\
    \hline
    \end{tabular}
  \caption{The explicit forms of the delta integral on some time scales.}\label{tsetbl3}
\end{table}

\subsection{Appendix B: Time scales polynomials}\label{tsp}

The generalized polynomials on time scales
(see \cite[Lemma~5]{MR1678096} and/or \cite[\S~1.6]{MR1843232}) $\hf{k}\in\cnt{}(\T\times\T,\R)$ are defined by
\begin{equation}
\hf{k}(t,s):=
\begin{cases}
1,&k=0\\
\displaystyle\int_{s}^{t}\hf{k-1}(\eta,s)\Delta\eta,&k\in\N
\end{cases}\quad\text{for}\ s,t\in\T.\label{tspeq1}
\end{equation}

\begin{table}[h!tb]
  \centering
    \begin{tabular}{c|ccc}
    \hline
    $\T$ & $\R$ & $h\Z$, $h\in\R^{+}$ & $q^{\N_{0}}$, $q\in(1,\infty)_{\R}$ \\ \hline
    $h_{n}(t,s)$ & $\displaystyle\frac{(t-s)^{n}}{n!}$ & $\displaystyle\frac{1}{n!}\prod_{i=0}^{n-1}(t-ih-s)$ & $\displaystyle\prod_{i=0}^{n-1}\frac{t-q^{i}s}{\sum_{j=0}^{i}q^{j}}$ \\
    \hline
    \end{tabular}
  \caption{The explicit forms of the monomials on some time scales.}\label{tsptbl1}
\end{table}

Note that, for all $s,t\in\T$ and all $k\in\N_{0}$, the function $\hf{k}$ satisfies
\begin{equation}
\hf{k}^{\Delta_{1}}(t,s)=
\begin{cases}
0,&k=0\\
\hf{k-1}(t,s),&k\in\N.
\end{cases}\label{tspeq2}
\end{equation}

\begin{property}[\protect{\cite[Property~1]{MR2506155}}]\label{tspprp1}
By using induction and \eqref{tspeq1}, it is easy to see for all $k\in\N_{0}$ that $\hf{k}(\cdot,s)\geq0$ on $[s,\infty)_{\T}$ and $(-1)^{k}\hf{k}(\cdot,s)\geq0$ on $(-\infty,s]_{\T}$.
In view of \eqref{tspeq2}, for all $k\in\N$, $\hf{k}(\cdot,s)$ is increasing on $[s,\infty)_{\T}$,
and $(-1)^{k}\hf{k}(\cdot,s)$ is decreasing on $(-\infty,s]_{\T}$.
\end{property}

\begin{lemma}[Taylor's formula \protect{\cite[Theorem~1.113]{MR1843232}}]\label{tsplm1}
If $n\in\N$, $s\in\T$ and $f\in\crd{n}(\T,\R)$, then
\begin{equation}
f(t)=\sum_{k=0}^{n-1}\hf{k}(t,s)f^{\Delta^{k}}(s)+\int_{s}^{t}\hf{n-1}\bigl(t,\sigma(\eta)\bigr)f^{\Delta^{n}}(\eta)\Delta\eta
\quad\text{for}\ t\in\T.\notag
\end{equation}
\end{lemma}

\begin{lemma}[\protect{\cite[Theorem~4.1]{MR2320804}}]\label{tsplm2}
If $k\in\N$, $\ell\in\N_{0}$ and $s\in\T$, then
\begin{equation}
\hf{k+\ell}(t,s)=\int_{s}^{t}\hf{k-1}\bigl(t,\sigma(\eta)\bigr)\hf{\ell}(\eta,s)\Delta\eta
\quad\text{for}\ t\in\T.\notag
\end{equation}
\end{lemma}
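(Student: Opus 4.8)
The plan is to obtain the identity as a direct specialization of \hyperref[tsplm1]{Taylor's formula} (Lemma~\ref{tsplm1}), applied to the monomial $f:=\hf{k+\ell}(\cdot,s)$ itself. The guiding observation is that $\Delta$-differentiation in the first slot simply lowers the index of a monomial: by iterating \eqref{tspeq2}, the function $f=\hf{k+\ell}(\cdot,s)$ satisfies $f^{\Delta^{j}}(\eta)=\hf{k+\ell-j}(\eta,s)$ for every $j\in[0,k]_{\Z}$. Each such derivative is again a monomial, hence rd-continuous, so $f\in\crd{k}(\T,\R)$ and Taylor's formula of order $n=k$ applies (here $k\in\N$ is exactly the order required).

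First I would write out Taylor's formula for $f$ about the base point $s$ with $n=k$:
\begin{equation}
\hf{k+\ell}(t,s)=\sum_{j=0}^{k-1}\hf{j}(t,s)f^{\Delta^{j}}(s)+\int_{s}^{t}\hf{k-1}\bigl(t,\sigma(\eta)\bigr)f^{\Delta^{k}}(\eta)\Delta\eta.\notag
\end{equation}
Then I would substitute $f^{\Delta^{j}}(s)=\hf{k+\ell-j}(s,s)$ into the sum and $f^{\Delta^{k}}(\eta)=\hf{k+\ell-k}(\eta,s)=\hf{\ell}(\eta,s)$ into the integrand. The entire sum collapses: for each $j\in[0,k-1]_{\Z}$ one has $k+\ell-j\geq\ell+1\geq1$, and $\hf{i}(s,s)=\int_{s}^{s}\hf{i-1}(\zeta,s)\Delta\zeta=0$ for every $i\in\N$ straight from the definition \eqref{tspeq1}. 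What remains is precisely
\begin{equation}
\hf{k+\ell}(t,s)=\int_{s}^{t}\hf{k-1}\bigl(t,\sigma(\eta)\bigr)\hf{\ell}(\eta,s)\Delta\eta
\quad\text{for all}\ t\in\T,\notag
\end{equation}
and this holds uniformly in $\ell\in\N_{0}$, with the degenerate case $\ell=0$ reducing to the defining recursion for $\hf{k}$.

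The chief virtue of this route is that there is essentially no obstacle: the only two points requiring care are the regularity $f\in\crd{k}(\T,\R)$ (automatic, since every $\Delta$-derivative of a monomial is a lower monomial and therefore rd-continuous) and the vanishing of the polynomial terms at the base point, both of which fall out of \eqref{tspeq1} and \eqref{tspeq2}. As an alternative I could argue by induction on $k$: the base case $k=1$ is the definition $\hf{\ell+1}(t,s)=\int_{s}^{t}\hf{\ell}(\eta,s)\Delta\eta$, and the inductive step would $\Delta_{1}$-differentiate the right-hand side via the time scales Leibniz rule, whose boundary contribution $\hf{k}(\sigma(t),\sigma(t))\hf{\ell}(t,s)$ vanishes because $\hf{k}(r,r)=0$ for $k\in\N$; matching $\Delta_{1}$-derivatives and values at $t=s$ and invoking uniqueness of antiderivatives would close the induction. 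The genuine subtlety of this second approach, entirely absent from the Taylor argument, is justifying the Leibniz differentiation of the parameter-dependent integral, so I would present the Taylor-formula proof as the primary one.
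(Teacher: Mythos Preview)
Your proof is correct. The paper does not supply its own argument for this lemma; it simply quotes the identity from \cite[Theorem~4.1]{MR2320804}, so there is nothing in the present paper to compare against line by line. Your Taylor-formula approach is clean and self-contained: applying Lemma~\ref{tsplm1} with $n=k$ to $f=\hf{k+\ell}(\cdot,s)$, using the iterated derivative rule \eqref{tspeq2} to obtain $f^{\Delta^{j}}=\hf{k+\ell-j}(\cdot,s)$, and then noting $\hf{i}(s,s)=0$ for $i\in\N$ to kill the polynomial part is exactly the right mechanism, and all regularity requirements are met because every $\hf{i}(\cdot,s)$ is continuous. The alternative inductive route you sketch is also viable, and your caution about the parameter-dependent Leibniz rule is well placed; the Taylor argument avoids that technicality entirely and is the preferable presentation.
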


As an immediate consequence of Lemma~\ref{tsplm2},
we can give the following alternative definition of the generalized polynomials:
\begin{equation}
\hf{k}(t,s):=
\begin{cases}
1,&k=0\\
\displaystyle\int_{s}^{t}\hf{k-1}\bigl(t,\sigma(\eta)\bigr)\Delta\eta,&k\in\N
\end{cases}\quad\text{for}\ s,t\in\T.\label{tspeq3}
\end{equation}

\begin{remark}
Using \cite[Theorem~1.112]{MR1843232} in Lemma~\ref{pilm1} yields the inequality
\begin{equation}
\gf{k}(t,s)\geq\hf{k}(t,s)
\quad\text{for}\ t\in[s,\infty)_{\T}\ \text{and}\ k\in\N_{0},\nonumber
\end{equation}
where $\gf{k}\in\cnt{}(\T\times\T,\R)$ is defined by
\begin{equation}
\gf{k}(t,s):=
\begin{cases}
1,&k=0\\
\displaystyle\int_{s}^{t}\gf{k-1}\bigl(\sigma(\eta),s\bigr)\Delta\eta,&k\in\N
\end{cases}\quad\text{for}\ s,t\in\T.\nonumber
\end{equation}
\end{remark}

\end{document}